\pgfplotsset{compat=1.14}
\newcommand{\GL}{\mathrm{GL}}
\newcommand{\Mod}[1]{\ (\mathrm{mod}\ #1)}
\newcommand{\icol}[1]{
  \left(\begin{smallmatrix}#1\end{smallmatrix}\right)%
}
\newcommand{\irow}[1]{
  \begin{smallmatrix}(#1)\end{smallmatrix}%
}
\newcommand{\brow}[1]{
  \begin{matrix}(#1)\end{matrix}%
}
\newtheorem{theorem}{Theorem}[section]
\newtheorem{lemma}[theorem]{Lemma}
\newtheorem{proposition}[theorem]{Proposition}
\newtheorem{definition}{Definition}[subsection]
\newtheorem{conjecture}[theorem]{Conjecture}
\newtheorem{remark}[theorem]{Remark}
\patchcmd{\@maketitle}{\LARGE}{\Large}{}{}
\title{Asymptotics of $p$-torsion subgroup sizes in class groups of monogenized cubic fields}
\author{Mikaeel Yunus}
\date{}
\begin{document}
\maketitle


\begin{abstract}

Bhargava, Hanke, and Shankar have recently shown that the asymptotic average $2$-torsion subgroup size of the family of class groups of monogenized cubic fields with positive and negative discriminants is $3/2$ and $2$, respectively. In this paper, we provide strong computational evidence for these asymptotes. We then develop a pair of novel conjectures that predicts, for $p$ prime, the asymptotic average $p$-torsion subgroup size in class groups of monogenized cubic fields.

\end{abstract}

\section{Introduction}
Asymptotics of class groups of number fields over the rationals have been studied for hundreds of years by a wide variety of mathematicians, including Gauss \cite{G} in the 18th century; Davenport, Heilbronn, Cohen, Lenstra, and Martinet \cite{DH, CL, CM} in the late 20th century; and Bhargava, Fouvry, and Klüners in the 21st century \cite{B, FK}. In 2014, Bhargava and Varma \cite{BV} showed that the asymptotic average $2$-torsion subgroup size of the family of class groups of cubic fields ordered by discriminant remains the same regardless of any local conditions imposed on these cubic fields. In 2018, Ho, Shankar, and Varma \cite{HSV} showed that the same average size remains even when ordering by height rather than discriminant.

However, Bhargava, Hanke, and Shankar \cite{BHS} have recently shown that mandating these cubic fields to have monogenic rings of integers – a \textit{global} condition – suprisingly changes the asymptotic average in question when ordering by height. This result is unexpected and interesting, and little is known about it aside from the calculation of the average $2$-torsion subgroup size in \cite{BHS} (see also \cite{SI} and \cite{SII}).


In this paper, we provide computational evidence that imposing the global monogenicity condition mentioned above does indeed increase the asymptotic average in question. We then provide models that verify the results of \cite{BHS} on the average $2$-torsion subgroup size of the family of class groups of cubic fields with monogenic rings of integers. 

Our general approach is as follows. We first use SageMath \cite{S} to calculate the average $2$-torsion subgroup size of the family of class groups for irreducible, monogenic, and maximal binary cubic forms with height less than $Y$. Here, $Y = 10^{11}$ for positive-discriminant binary cubic forms, and $Y = 10^{10}$ for negative-discriminant binary cubic forms. We then employ genetic programming \cite{K, SL} to predict the necessary asymptotes and provide evidence for their agreement with the theoretical values \cite{BHS}.

We subsequently present our main result: a pair of new conjectures predicting, for prime $p$, the asymptotic averages of $p$-torsion subgroup sizes of class groups of monogenized cubic fields ordered by height.\footnote{Note that ordering these monogenized cubic fields by height instead of discriminant should not change the asymptotic averages in question (compare the results of \cite{B} and \cite{HSV}).} For positive discriminants, we predict the averages size to be:  

\begin{equation}
1+\dfrac{1}{p(p-1)}
\end{equation}

For negative discriminants: 

\begin{equation}
1+\dfrac{1}{p-1}
\end{equation}

We develop these conjectures using similar computational methods to the ones we use to provide evidence for the results of \cite{BHS}. 

\section{Theoretical Background}

Here, we present definitions that are necessary for the result of \cite{BHS} that we work with in this paper.  

\subsection{Preliminary Definitions}
Our goal is to study class groups\footnote{Note that we will be using the same definition of a class group (also known as an ideal class group) as the one provided in \cite{MB}.} of number fields. We begin by reviewing number fields. 

\begin{definition} Let $K_0$ be a field. Then, the \textbf{degree} of the field extension $K / K_0$ is the dimension of $K$ as a vector space over $K_0$. 
\end{definition}

\begin{definition} A \textbf{number field} $K$ is a field extension of $\mathbb{Q}$ that has a finite degree. 
\end{definition}

Number fields are a generalization of the rational numbers, $\mathbb{Q}$. Just as the rational numbers contain the ring $\mathbb{Z}$, there is an analogous {\em ring of integers} of a number field. 

\begin{definition} A \textbf{ring of integers} $\mathcal{O}_K$ of a number field $K$ is the ring of all $\alpha$ in $K$ for which there exists a minimal polynomial $f$ such that $f(\alpha) = 0$ and $f(x)$ has coefficients in $\mathbb{Z}$. 
\end{definition}


The ring of integers of a number field is always a {\em Dedekind domain}. An important difference between rings of integers and $\mathbb{Z}$ is that the rational integers are a {\em principal ideal domain} (in fact, $\mathbb{Z}$ is a {\em Euclidean domain}). 

Additionally, the ring of integers of a number field is maximal in the sense that it is the maximal {\em order} of a number field $K$. 

\begin{definition} Let $K$ be a number field. Then, an \textbf{order} of $K$ is a subring of $\mathcal{O}_K$ whose fraction field is equal to $K$. 
\end{definition}
While orders may not be Dedekind domains, they are always {\em integral domains}. 

\begin{definition} An integral domain $R$ has \textbf{rank} $n$ if its fraction field is a number field of degree $n$. 
\end{definition}



\begin{definition} \label{monogenic1}
We say an integral domain $R$ of rank $n$ is \textbf{monogenic} if and only if
\[ R = \left\{ \sum_{i=0}^{n-1} a_i \gamma^i \mid a_i \in \mathbbm{Z} \right\} \] for some $\gamma \in R$. In other words, all elements of $R$ can be represented as a sum of integer multiples of the powers of exactly one element. 
\end{definition}


We now recall two definitions from \cite{BHS}: 

\begin{definition} \label{monogenic2}
We define a \textbf{monogenized cubic integral domain} to be a pair $(R,\alpha)$, where $R$ is a cubic integral domain, and $\alpha \in R$ such that $R = \mathbb{Z}[\alpha]$. 
\end{definition}

\begin{remark} \label{remark}

A monogenized cubic integral domain is monogenic by definition.

\end{remark}

\begin{definition} \label{monogenic3}
We define a \textbf{monogenized cubic field} to be a pair $(K,\alpha)$, where $K$ is a cubic field, and $\alpha \in \mathcal{O}_K$ such that $\mathcal{O}_K = \mathbb{Z}[\alpha]$. 
\end{definition}

Finally, we recall a definition from \cite{BHS} that is essential to Theorem \ref{Thm2.2} below. 

\begin{definition}
Let $(R, \alpha)$ and $(R',\alpha')$ be two monogenized cubic integral domains. Then, $(R, \alpha)$ and $(R',\alpha')$ are \textbf{isomorphic} if there exists an isomorphism $R \rightarrow R'$ under which $\alpha$ is mapped to $\alpha + m$ for some $m \in \mathbb{Z}$. 
\end{definition}

\subsection{The Delone-Faddeev Correspondence}

\begin{definition}
A \textbf{binary cubic form} $f$ is a cubic homogeneous polynomial in two variables with integer coefficients, i.e. 
$f(x,y) = f_0x^3 + f_1x^2y + f_2 xy^2 + f_3y^3$ where $f_0, f_1, f_2, f_3 \in \mathbb{Z}$.
\end{definition}
We define the action of $$\GL_2(\mathbb{Z}) = \left\{ \gamma = \left(\begin{matrix} a & b \\ c & d \end{matrix}\right) \ \middle| \  a,b,c,d \in \mathbb{Z}\  \mbox{ and } \det(\gamma) = ad-bc = \pm 1\right\}$$
on the space of all integral binary cubic forms by:
	\begin{align*}
	    \gamma \cdot f(x,y) &\coloneqq \frac{1}{\det(\gamma)} \ f\left(\brow{x&y} \cdot \gamma \right) \\ 
	    &= \frac{1}{ad-bc} \ f\left(\brow{x&y} \cdot \left(\begin{matrix} a & b \\ c & d \\ \end{matrix}\right) \right)
	\end{align*}

For the remainder of this paper, we refer to an integral domain of rank 3 as a cubic integral domain. 
 
\begin{definition}
The \textbf{discriminant} of a binary cubic form is 
$$\emph{Disc}(f) := f_1^2f_2^2 - 4f_0f_2^3-4f_3f_1^3 - 27f_0^2f_3^3 + 18f_0f_1f_2f_3$$
\end{definition}






We now recall the Delone-Faddeev correspondence as given in \cite{BST}.
\begin{theorem}[Delone-Faddeev \cite{DF}]\label{df} There is a natural bijection between the set of $\GL_2(\mathbb{Z})$-equivalence classes of irreducible integral binary cubic forms and the set of isomorphism classes of cubic integral domains. Furthermore, the discriminant of an integral binary cubic form $f$ is equal to the discriminant of the corresponding cubic integral domain $R(f)$. 
\end{theorem}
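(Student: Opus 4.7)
The plan is to prove Theorem \ref{df} by exhibiting explicit functors in both directions, following the classical Delone-Faddeev construction. To a binary cubic form $f(x,y) = f_0 x^3 + f_1 x^2 y + f_2 x y^2 + f_3 y^3$ I would associate the rank-$3$ $\mathbb{Z}$-module
\[
R(f) = \mathbb{Z}\cdot 1 \,\oplus\, \mathbb{Z}\cdot \omega \,\oplus\, \mathbb{Z}\cdot \theta
\]
equipped with multiplication given by
\begin{align*}
\omega\theta &= -f_0 f_3, \\
\omega^2 &= -f_0 f_2 + f_1 \omega - f_0 \theta, \\
\theta^2 &= -f_1 f_3 + f_3 \omega - f_2 \theta.
\end{align*}
A direct, if tedious, associativity check on the generators shows that this makes $R(f)$ into a commutative ring. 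For irreducible $f$, the ring $R(f)$ is an integral domain: any nontrivial idempotent in $R(f) \otimes_\mathbb{Z} \mathbb{Q}$ would split off a rational factor of the characteristic polynomial of $\omega$, producing a rational root of $f$.

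For the inverse direction, given a cubic integral domain $R$, I would begin from an arbitrary $\mathbb{Z}$-basis $(1, \omega_0, \theta_0)$ and expand $\omega_0 \theta_0 = A + B\omega_0 + C\theta_0$ in this basis. Replacing $(\omega_0, \theta_0)$ by $(\omega, \theta) = (\omega_0 - C,\, \theta_0 - B)$ uniquely normalizes the basis so that $\omega\theta$ lies in $\mathbb{Z}\cdot 1$. Under this normalization, the remaining products $\omega^2$ and $\theta^2$ are forced into the shape displayed above (by using associativity to solve for their coefficients in terms of $\omega\theta$), so one can read off four integers $f_0, f_1, f_2, f_3$ as the coefficients of a binary cubic form $f$. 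The only remaining freedom in the basis choice is a change of variables by $\GL_2(\mathbb{Z})$ acting on $(\omega, \theta)$, and a short computation confirms that it transforms the extracted coefficients by precisely the $\GL_2(\mathbb{Z})$-action on forms defined in the paper.

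With both constructions in hand, I would verify that they are mutually inverse by composing them in each direction and matching structure constants. The final piece is the discriminant equality: I would compute the trace pairing on $R(f)$ in the basis $(1, \omega, \theta)$ using the multiplication table above, assemble the $3 \times 3$ Gram matrix, and expand its determinant, checking that it agrees with $\mathrm{Disc}(f)$ as defined above.

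The main obstacle is bookkeeping. The associativity verification, although mechanical, is long; but the real subtlety lies in showing that the ring-to-form construction is genuinely canonical and functorial, so that the two constructions descend to a well-defined bijection on equivalence classes rather than a mere correspondence between representatives. This is where the uniqueness of the normalization $(\omega, \theta) = (\omega_0 - C, \theta_0 - B)$ plays an essential role, since it ensures that the residual ambiguity in choosing a basis is exactly $\GL_2(\mathbb{Z})$ and nothing more.
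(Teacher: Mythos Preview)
The paper does not actually prove this theorem: it is stated as a citation to Delone--Faddeev \cite{DF} (in the formulation of \cite{BST}), and the multiplication table that follows is presented only as a description of the construction, not as part of a proof. So there is no ``paper's own proof'' to compare against; your sketch is essentially the standard argument one finds in \cite{DF} or \cite{GGS,BST}, and as an outline it is sound.

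One concrete issue worth fixing: your multiplication table does not match the paper's conventions. The paper records
\[
\omega^2 = -f_0 f_2 - f_1\omega + f_0\theta,\qquad
\theta^2 = -f_1 f_3 - f_3\omega + f_2\theta,
\]
whereas you wrote $\omega^2 = -f_0 f_2 + f_1\omega - f_0\theta$ and $\theta^2 = -f_1 f_3 + f_3\omega - f_2\theta$. Both conventions appear in the literature (they differ by a sign twist on the basis or, equivalently, on the $\GL_2(\mathbb{Z})$-action), but since later arguments in the paper---for instance Lemma~\ref{Comp Methodology Step 2} and Lemma~\ref{f_0}---rely on the paper's signs, your version would propagate inconsistencies if inserted here. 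Beyond that, your outline (normalize a basis so that $\omega\theta\in\mathbb{Z}$, read off the form, check that residual basis changes are exactly $\GL_2(\mathbb{Z})$, and verify the discriminant via the trace form) is the classical route and would go through once the signs are aligned.
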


Although we do not define the discriminant of a cubic integral domain here, we can utilize the above theorem to define $$\textrm{Disc}(R(f)) := \textrm{Disc}(f)$$

The construction of a cubic integral domain from a binary cubic form can be described quite explicitly in the form of a multiplication table. Given a binary cubic form $f(x,y) = f_0x^3 + f_1x^2y + f_2 xy^2 + f_3y^3$, the cubic integral domain associated to $f(x,y)$ under Theorem \ref{df} can be written as
\[ R(f) := \{a+b\cdot\omega+c\cdot\theta \mid a,b,c \in \mathbbm{Z} \} \] 
where:
\begin{eqnarray}
\omega\cdot\theta &=& \theta\cdot\omega = -f_0\cdot f_3\\ \label{omegasquared}
\omega^2 &=& -f_0\cdot f_2 - f_1\cdot\omega + f_0\cdot\theta\\
\theta^2 &=& -f_1\cdot f_3 - f_3\cdot\omega + f_2\cdot\theta
\end{eqnarray}
Now, building on Theorem \ref{df}, we can establish a correspondence between binary cubic forms and fraction fields.

\begin{lemma} \label{Comp Methodology Step 2}
Let $f(x,y) = f_0x^3+f_1x^2y+f_2xy^2+f_3y^3$ be a binary cubic form, and let $R(f)$ be its corresponding integral domain (cf.~Theorem \ref{df}). Then the fraction field $\textrm{Frac}(R(f))$ is precisely the fraction field of the polynomial $g(x) = x^3 + f_1x^2 + f_0f_2x + f_0^2f_3$. 
\end{lemma}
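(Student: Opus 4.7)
The plan is to exhibit a specific element of $R(f)$ whose minimal polynomial is $g(x)$, and then conclude that this element generates the fraction field. The natural candidate is $\omega$ itself: I will show directly that $g(\omega)=0$ using the multiplication table provided after Theorem \ref{df}.

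First I would compute $\omega^3 = \omega\cdot\omega^2$ by substituting the identity $\omega^2 = -f_0 f_2 - f_1\omega + f_0\theta$, and then re-expanding the resulting $\omega^2$ and $\omega\theta$ terms using the table again. This yields $\omega^3$ as an integer combination of $1,\omega,\theta$. Plugging this expression, together with the formula for $\omega^2$, into $g(\omega) = \omega^3 + f_1\omega^2 + f_0 f_2\omega + f_0^2 f_3$ and collecting coefficients of $1$, $\omega$, and $\theta$ separately, each coefficient should cancel to zero. This calculation is routine but is the only genuinely computational step; I expect it to be the main mechanical obstacle, though there is nothing subtle about it.

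Once $g(\omega)=0$ is established, I need to pass from $R(f)$ to its fraction field. By Theorem \ref{df}, $R(f)$ is a cubic integral domain, so $\mathrm{Frac}(R(f))$ is a field of degree exactly $3$ over $\mathbb{Q}$. The element $\omega$ cannot lie in $\mathbb{Q}$: if it did, then the relation $\omega^2 = -f_0 f_2 - f_1\omega + f_0\theta$ would force $\theta \in \mathbb{Q}$ as well (using $f_0 \neq 0$, which holds because irreducibility of $f$ rules out $f_0=0$), contradicting the fact that $1,\omega,\theta$ are a $\mathbb{Z}$-basis of a rank-$3$ ring. Hence $\mathbb{Q}(\omega)$ is a nontrivial subextension of $\mathrm{Frac}(R(f))$, and since both have $\mathbb{Q}$-degree $3$, they coincide.

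Finally, because $g$ is a monic cubic killing $\omega$ and $\omega$ has degree $3$ over $\mathbb{Q}$, $g$ must be the minimal polynomial of $\omega$; in particular $g$ is irreducible, and the assignment $x\mapsto \omega$ induces an isomorphism $\mathbb{Q}[x]/(g(x)) \cong \mathbb{Q}(\omega) = \mathrm{Frac}(R(f))$, which is exactly the statement of the lemma.
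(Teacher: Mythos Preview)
Your proposal is correct and follows essentially the same approach as the paper: both arguments show that $g(\omega)=0$ using the multiplication table and identify $\mathrm{Frac}(R(f))$ with $\mathbb{Q}(\omega)$. The paper reaches $g(\omega)=0$ slightly differently, by writing $\theta=-f_0f_3/\omega$ in the fraction field and substituting into the $\omega^2$ relation, whereas you expand $\omega^3$ directly in the basis $1,\omega,\theta$; your treatment of the final step (why $\mathbb{Q}(\omega)$ fills out the whole fraction field) is more careful than the paper's.
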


\begin{proof}
If $f(x,y) = f_0x^3+f_1x^2y+f_2xy^2+f_3y^3$, then the basis elements of $R(f)$ are $1, \omega, \theta$, where $\omega, \theta$ are determined via Equations (1), (2), and (3) in the discussion following Theorem \ref{df}. 

This means that
\[
\begin{cases}
\theta = \frac{-f_0f_3}{\omega}  \hspace{40mm} \\
\omega^2 = -f_0f_2 - f_1\omega + f_0\theta   \hspace{13.5mm} \\
\end{cases}
\]
Thus, substituting the first equation above into the second equation, 
$$\omega^2 = -f_0f_2 - f_1\omega - \frac{f_0^2f_3}{\omega}$$
Multiplying both sides by $\omega$ and rearranging, we have
$$\omega^3 + f_1\omega^2 + f_0f_2\omega + f_0^2f_3 = 0$$
Since the solution to the above equation is the second basis element of the cubic integral domain corresponding to the given binary cubic form, the fraction field of this binary cubic form is precisely the fraction field of the following polynomial: 
$$g(x) = x^3 + f_1x^2 + f_0f_2x + f_0^2f_3$$
\end{proof}

\subsection{Special Properties of Binary Cubic Forms}

We start with the following definition: 
\begin{definition}
A \textbf{monogenic} binary cubic form is a binary cubic form that corresponds to a monogenic cubic integral domain through the Delone-Faddeev correspondence. 
\end{definition}

In this subsection, we present two constraints that we are allowed to make on the coefficients of monogenic binary cubic forms. These constraints prove useful in the computational methodology (Section 3). 

We begin with a constraint on the $x^3$-coefficient $f_0$ of an arbitrary monogenic binary cubic form. 

\begin{lemma} \label{f_0}
Given $f_0, f_1, f_2, f_3 \in \mathbb{Z}$, the binary cubic form $f_0x^3 + f_1x^2y + f_2xy^2 + f_3y^3$ is monogenic if and only if we can perform a $\GL_2(\mathbb{Z})$-action on it to achieve another binary cubic form $f_0'x^3 + f_1'x^2y + f_2'xy^2 + f_3'y^3$ with $f_0', f_1', f_2', f_3' \in \mathbb{Z}$ and $f_0' = 1$. 
\end{lemma}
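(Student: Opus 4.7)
The plan is to apply Theorem \ref{df} to translate $\GL_2(\mathbb{Z})$-equivalence of forms into ring isomorphism of their associated cubic integral domains, and to use the explicit multiplication table (Equations (1)--(3) after Theorem \ref{df}) to see precisely what forcing $f_0' = 1$ buys.

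For the backward direction, I would assume $f$ is $\GL_2(\mathbb{Z})$-equivalent to some $f'$ with $f_0' = 1$. By Theorem \ref{df} we then have $R(f) \cong R(f')$, so it suffices to show that $R(f')$ is monogenic. Substituting $f_0' = 1$ into Equation (2) gives $\omega^2 = -f_2' - f_1'\omega + \theta$, which rearranges to $\theta = \omega^2 + f_1'\omega + f_2'$, an element of $\mathbb{Z}[\omega]$. Since $R(f')$ has $\mathbb{Z}$-basis $\{1, \omega, \theta\}$ and each basis element lies in $\mathbb{Z}[\omega]$, we obtain $R(f') = \mathbb{Z}[\omega]$, which is monogenic by Definition \ref{monogenic1}. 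Hence $R(f)$ is monogenic, so $f$ is monogenic by definition.

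For the forward direction, suppose $f$ is monogenic, so $R(f) = \mathbb{Z}[\alpha]$ for some $\alpha \in R(f)$. Since $R(f)$ is a cubic integral domain, it has $\mathbb{Z}$-rank $3$ inside a degree-$3$ number field, forcing $\alpha$ to have a monic minimal polynomial $h(x) = x^3 + ax^2 + bx + c \in \mathbb{Z}[x]$ of degree exactly $3$. I would then propose the candidate form $f'(x,y) = x^3 + ax^2 y + bxy^2 + c y^3$. By Lemma \ref{Comp Methodology Step 2}, $\mathrm{Frac}(R(f'))$ is the fraction field of $h(x)$, and by the computation of the previous paragraph $R(f') = \mathbb{Z}[\omega']$ where $\omega'$ is a root of $h$ in $R(f')$. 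Hence $R(f') \cong \mathbb{Z}[x]/(h(x)) \cong \mathbb{Z}[\alpha] = R(f)$, and Theorem \ref{df} produces a $\GL_2(\mathbb{Z})$-equivalence carrying $f$ to $f'$.

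The main subtlety is ensuring that the generator $\alpha$ really has degree exactly $3$ over $\mathbb{Q}$, so that $h$ is a genuine cubic and $f'$ is an irreducible integral binary cubic form to which the Delone-Faddeev bijection can legitimately be applied; this is where the rank-$3$ hypothesis on $R(f)$ enters essentially. Apart from that, the proof is a direct unpacking of the multiplication table combined with the bijection of Theorem \ref{df}.
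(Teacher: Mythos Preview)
Your proposal is correct and shares the same core idea as the paper: both directions hinge on the multiplication table (Equation~\eqref{omegasquared}) together with the monic minimal polynomial of a ring generator. Your backward direction is identical to the paper's.

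In the forward direction the two arguments diverge slightly in packaging. The paper stays inside $R(f)$: it takes the generator $\alpha$, forms the normalized basis $\omega=\alpha+g_1$, $\theta=\alpha^2+g_2$, computes $\omega^2$ in that basis, and reads off $f_0=1$ directly from~\eqref{omegasquared}; the $\GL_2(\mathbb{Z})$-equivalence is then implicit in the (unstated but standard) fact that two normalized bases of the same cubic ring yield $\GL_2(\mathbb{Z})$-equivalent forms. You instead build the candidate form $f'$ by homogenizing the minimal polynomial $h$, reuse the backward-direction computation to identify $R(f')=\mathbb{Z}[\omega']\cong\mathbb{Z}[x]/(h)\cong\mathbb{Z}[\alpha]=R(f)$, and then invoke the bijection of Theorem~\ref{df} as a black box to obtain the $\GL_2(\mathbb{Z})$-equivalence. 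Your route has the mild advantage that it appeals only to Theorem~\ref{df} exactly as stated, without the extra (though easy) claim about change of normalized basis; the paper's route is a touch more concrete in that it exhibits the new form as arising from an explicit basis of the original ring. Your remark about needing $\alpha$ to have degree exactly~$3$ is the right check to make, and it is immediate from the rank-$3$ hypothesis.
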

\begin{proof}

First, we write the proof of the reverse direction. Assume that $f_0' = 1$. Then, by the Delone-Faddeev correspondence, the following system of equations is satisfied: 
\begin{eqnarray*}
\omega\cdot\theta &=& \theta\cdot\omega = -f_3'\\
\omega^2 &=& -f_2' - f_1'\cdot\omega + \theta\\
\theta^2 &=& -f_1'\cdot f_3' - f_3'\cdot\omega + f_2'\cdot\theta
\end{eqnarray*}
From the second equation in the system above, we have $\theta = \omega^2 + f_2' + f_1'\omega$. As a result, we have that any element in $R(f)$ can be written as a linear combination of powers of $\omega$: 
\begin{eqnarray*}
a + b\omega + c\theta &=& a + b\omega + c(\omega^2 + f_2' + f_1'\omega) \\
&=& (a+cf_2') + (b+cf_1')\omega + c\omega^2
\end{eqnarray*}
Let $a' = a+cf_2'$, $b' = b+cf_1'$, and $c' = c$. Then, we have $R(f) = \{a' + b'\omega + c'\omega^2 \mid a',b',c' \in \mathbb{Z}\}$. Thus, $R(f)$ is monogenic. 

Now, we write the proof of the forward direction. Since the binary cubic form $f(x,y)$ is monogenic, the corresponding integral domain $R(f)$ is monogenic. For some $\alpha \in R(f)$, we can write $R(f) = \langle 1, \alpha, \alpha^2 \rangle$. 

Since $\alpha$ is a member of a cubic integral domain, there exists a monic polynomial $g(x) = x^3 + g_1x^2 + g_2x + g_3$, where $g_1, \cdots, g_3 \in \mathbb{Z}$, such that $g(\alpha) = 0$. Keeping this in mind, we can redefine $R(f)$ as follows: $$R(f) = \langle 1, \alpha + g_1, \alpha^2 + g_2 \rangle$$

We note that $$(\alpha + g_1)(\alpha^2 + g_2) = \alpha^3 + g_1\alpha^2 + g_2\alpha + g_1g_2 = -g_3 + g_1g_2 \in \mathbb{Z}$$

Thus, the basis $\langle 1, \alpha, \alpha^2 \rangle$ corresponds to a binary cubic form through the Delone-Faddeev correspondence. 

Let $\omega = \alpha + g_1$, and $\theta = \alpha^2 + g_2$. We compute the following: 

$$\omega^2 = (\alpha + g_1)^2 = \alpha^2 + 2\alpha g_1 + g_1^2$$

Note that since $\omega = \alpha + g_1$ and $\theta = \alpha^2 + g_2$, we have $\alpha = \omega - g_1$ and $\alpha^2 = \theta - g_2$. Thus, 
\begin{eqnarray*}
\omega^2 &=& (\theta - g_2) + 2(\omega - g_1) g_1 + g_1^2 \\
&=& \theta - g_2 + 2\omega g_1 - 2g_1^2 + g_1^2 \\
&=& -(g_1^2 + g_2) + 2g_1\omega + \theta
\end{eqnarray*}
In conjunction with the above, \eqref{omegasquared} implies that $f_0 = 1$. 

\end{proof}

For the remainder of this paper, we assume any monogenic binary cubic form has $x^3$-coefficient $1$, because out interest lies in cubic domains, not their bases, and we can choose whichever binary cubic form we want to represent its own $\GL_2(\mathbb{Z})$-equivalence class. 

Before we proceed, we define the following property of monogenic binary cubic forms, which we call ``height." This property lets us not only assign a weak ordering to binary cubic forms, but also a weak ordering to cubic fields.

\begin{definition} \label{height} 
Let $f = x^3 + f_1x^2y + f_2xy^2 + f_3y^3$ be a monogenic binary cubic form, and let $I(f) = f_1^2-3f_2$, $J(f)=-2f_1^3+9f_1f_2-27f_3$. Then the \textbf{height} $H$ of $f$ is as follows: 
$$H(f) := \max\left(|I|^3, \frac{J^2}{4}\right)$$
If $f$ is clear from context, we will simply let $I=I(f)$ and $J=J(f)$. 
\end{definition}

Now, having defined the height invariant, we set a constraint on the \textit{second} coefficient $f_1$ of an arbitrary binary cubic form. But first, we must introduce a little more theory. 

\begin{definition} \label{F(Z)}
We define the subgroup $F(\mathbb{Z}) < \GL_2(\mathbb{Z})$ as follows: 

$$F(\mathbb{Z}) = \left\{\gamma_a = \left(\begin{matrix} 1 & 0 \\ a & 1 \end{matrix}\right) \ \middle| \ a \in \mathbb{Z}\right\}$$

\end{definition}

Note that every $\gamma_a \in F(\mathbb{Z})$ has determinant $1$. Furthermore, note that for some monogenic binary cubic form $f(x,y) = x^3 + f_1x^2y + f_2xy^2 + f_3y^3$ and $\gamma_a \in F(\mathbb{Z})$, $\irow{x&y} \cdot \gamma_a = \icol{x+ay \\ y}$. Thus, $\gamma_a \cdot f(x,y) = f(x+ay,y)$. 

\begin{lemma} \label{monogenic_preservation}
Let $f(x,y)$ be a binary cubic form with $x^3$-coefficient $1$, and let $\gamma_a \in F(\mathbb{Z})$. Then, the $x^3$-coefficient of $\gamma_a \cdot f(x,y)$ is also $1$.
\end{lemma}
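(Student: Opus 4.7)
The plan is to reduce the claim to a direct substitution and pick off the $x^3$-coefficient. Since $\det(\gamma_a) = 1 \cdot 1 - 0 \cdot a = 1$, the definition of the $\GL_2(\mathbb{Z})$-action on integral binary cubic forms given in the excerpt simplifies to
\[
\gamma_a \cdot f(x,y) = f\!\left(\begin{matrix}(x & y)\end{matrix} \cdot \gamma_a\right) = f(x + ay,\, y),
\]
as already noted in the paragraph immediately preceding the lemma statement. So the problem is entirely about the polynomial $f(x+ay, y)$.

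Next, I would substitute $x \mapsto x + ay$, $y \mapsto y$ into $f(x,y) = x^3 + f_1 x^2 y + f_2 x y^2 + f_3 y^3$ and track which terms can possibly produce a pure $x^3$ monomial. The only term whose total $x$-degree is $3$ after substitution is $(x+ay)^3$, since each of the remaining terms $f_1(x+ay)^2 y$, $f_2 (x+ay) y^2$, and $f_3 y^3$ carries at least one explicit factor of $y$. Expanding $(x+ay)^3 = x^3 + 3ax^2 y + 3a^2 x y^2 + a^3 y^3$ shows that the coefficient of $x^3$ in $\gamma_a \cdot f(x,y)$ is exactly $1$, as desired.

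There is really no obstacle here: the lemma is essentially the observation that the subgroup $F(\mathbb{Z})$ acts by upper-triangular changes of variable (after swapping the roles of $x$ and $y$), which preserves the leading coefficient. The only thing to be careful about is the sign convention and the $1/\det(\gamma)$ factor in the definition of the action, both of which are handled by noting $\det(\gamma_a) = 1$. The proof should thus consist of one displayed equation for the action and one short expansion.
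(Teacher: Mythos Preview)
Your proof is correct and follows essentially the same approach as the paper: both reduce to $\gamma_a \cdot f(x,y) = f(x+ay,y)$ and then read off the $x^3$-coefficient from the expansion. The only difference is cosmetic---the paper expands $f(x+ay,y)$ in full, while you argue more economically that only the $(x+ay)^3$ term can contribute to $x^3$; either way the conclusion is immediate.
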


\begin{proof}
Let $f(x,y) = x^3 + f_1x^2y + f_2xy^2 + f_3y^3$. In the discussion following Definition \ref{F(Z)}, we show that $\gamma_a \cdot f(x,y) = f(x+ay,y)$. We now compute the following: 
\begin{eqnarray*}
f(x+ay,y) &=& (x+ay)^3+f_1(x+ay)^2y+f_2(x+ay)y^2+f_3y^3 \\
&=& x^3 + 3x^2ay+3xa^2y^2+a^3y^3 + f_1x^2y \\
&& + 2f_1xay^2 + f_1a^2y^3 + f_2xy^2 + f_2ay^3 + f_3 y^3 \\
&=& x^3 + (3a + f_1)x^2y + (3a^2 + 2f_1a + f_2)xy^2 \\ 
&& + (a^3 + f_1a^2 + f_2a + f_3)y^3
\end{eqnarray*}
Upon observation, we can see that the $x^3$-coefficient of this polynomial is still $1$. 
\end{proof}

\begin{lemma} \label{ij_invariance}
Let $f(x,y)$ be a binary cubic form with $x^3$-coefficient $1$, and let $\gamma_a \in F(\mathbb{Z})$. Then $I(\gamma_a \cdot f) = I(f)$ and $J(\gamma_a \cdot f) = J(f)$.  
\end{lemma}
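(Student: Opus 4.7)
The plan is to reduce everything to a direct computation using the explicit formula for the coefficients of $\gamma_a\cdot f$ established in Lemma \ref{monogenic_preservation}. From the expansion there, if $f(x,y)=x^3+f_1x^2y+f_2xy^2+f_3y^3$ then $\gamma_a\cdot f$ has coefficients
\begin{align*}
f_1' &= 3a+f_1,\\
f_2' &= 3a^2+2f_1a+f_2,\\
f_3' &= a^3+f_1a^2+f_2a+f_3.
\end{align*}
I would simply substitute these into the definitions $I=f_1^2-3f_2$ and $J=-2f_1^3+9f_1f_2-27f_3$ and verify cancellation.

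For $I$, the computation is short: $(3a+f_1)^2-3(3a^2+2f_1a+f_2)=9a^2+6af_1+f_1^2-9a^2-6af_1-3f_2=f_1^2-3f_2$, so $I(\gamma_a\cdot f)=I(f)$. For $J$, one expands
\[-2(3a+f_1)^3+9(3a+f_1)(3a^2+2f_1a+f_2)-27(a^3+f_1a^2+f_2a+f_3)\]
and groups terms by monomials in $a,f_1,f_2,f_3$. The coefficients of $a^3$, $a^2f_1$, $af_1^2$, and $af_2$ each cancel (contributions $-54+81-27$, $-54+81-27$, $-18+18$, and $27-27$ respectively), leaving exactly $-2f_1^3+9f_1f_2-27f_3=J(f)$.

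The only real obstacle is bookkeeping in the $J$ expansion; there is no conceptual difficulty since $I$ and $J$ are the classical $\mathrm{SL}_2$-invariants of a binary cubic, and $\gamma_a$ has determinant $1$, so invariance is essentially automatic — the value in writing it out is to confirm the sign conventions match Definition \ref{height}. I would therefore present the $I$ calculation in full and then display the $J$ expansion by collecting coefficients of each monomial in $a$, explicitly tabulating the cancellations rather than writing a single multi-line expansion, to keep the proof compact but verifiable.
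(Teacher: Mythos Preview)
Your proposal is correct and follows essentially the same approach as the paper: both substitute the explicit transformed coefficients from Lemma \ref{monogenic_preservation} into the definitions of $I$ and $J$ and verify cancellation directly. The only difference is cosmetic---the paper writes out the full multi-line expansion for $J$, whereas you organize the same cancellation by monomial in $a$---and your added remark about $I,J$ being $\mathrm{SL}_2$-invariants is a nice conceptual gloss the paper omits.
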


\begin{proof}
Let $f(x,y) = x^3 + f_1x^2y + f_2xy^2 + f_3y^3$. In the proof of Lemma \ref{monogenic_preservation}, we show that $$\gamma_a \cdot f = f(x+ay,y) = x^3 + (3a + f_1)x^2y + (3a^2 + 2f_1a + f_2)xy^2 + (a^3 + f_1a^2 + f_2a + f_3)y^3$$

We can see that the action of $\gamma_a$ sends $f_1$ to $3a + f_1$, $f_2$ to $3a^2 + 2f_1a + f_2$, and $f_3$ to $a^3 + f_1a^2 + f_2a + f_3$. Thus, this action sends $I(f) = f_1^2 - 3f_2$ to the following: $$I(\gamma_a \cdot f) = (3a + f_1)^2 - 3(3a^2 + 2f_1a + f_2) = 9a^2 + 6af_1 + f_1^2 - 9a^2 - 6f_1a - 3f_2$$

Thus, by cancellation, $I(\gamma_a \cdot f) = f_1^2 - 3f_2$, so $I(\gamma_a \cdot f) = I(f)$.

Now, we turn towards $J(f) = -2f_1^3+9f_1f_2-27f_3$. We compute the following: 
\begin{eqnarray*}
J(\gamma_a \cdot f) &=& -2(3a + f_1)^3 + 9(3a + f_1)(3a^2 + 2f_1a + f_2) \\
&&- 27(a^3 + f_1a^2 + f_2a + f_3) \\
&=& -2(27 a^3 + 27 a^2 f_1 + 9 a f_1^2 + f_1^3) \\ 
&&+ 9(9 a^3 + 9 a^2 f_1 + 2 a f_1^2 + 3 a f_2 + f_1 f_2) \\
&&- 27(a^3 + f_1a^2 + f_2a + f_3) \\
&=& -54 a^3 - 54 a^2 f_1 - 18 a f_1^2 - 2 f_1^3 \\
&&+ 81 a^3 + 81 a^2 f_1 + 18 a f_1^2 + 27 a f_2 + 9 f_1 f_2 \\
&&+ -27 a^3 - 27 a^2 f_1 - 27 a f_2 - 27 f_3
\end{eqnarray*}

Thus, by cancellation, $J(\gamma_a \cdot f) = -2f_1^3+9f_1f_2-27f_3$, so $J(\gamma_a \cdot f) = J(f)$. This concludes the proof. 
\end{proof}

\begin{definition}
We define the \textbf{height} of a monogenized cubic integral domain to be equivalent to the height of its corresponding orbit of binary cubic forms.
\end{definition}

\begin{lemma} \label{f_1}
In every $F(\mathbb{Z})$-equivalence class of monogenic binary cubic forms, there exists exactly one binary cubic form $f(x,y) = x^3 + f_1x^2y + f_2xy^2 + f_3y^3$ with $f_1 \in \{-1, 0, 1\}$. Additionally, $f_1 \equiv J(f) \Mod 3$. 
\end{lemma}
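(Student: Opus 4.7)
The plan is to exploit the explicit computations already done in Lemma~\ref{monogenic_preservation} and Lemma~\ref{ij_invariance}. The key observation extracted from those lemmas is that for $\gamma_a \in F(\mathbb{Z})$ and $f(x,y) = x^3 + f_1 x^2 y + f_2 xy^2 + f_3 y^3$, the action sends $f_1$ to $f_1 + 3a$ while keeping $f_0 = 1$ and fixing both $I(f)$ and $J(f)$. In particular, within an $F(\mathbb{Z})$-orbit of monogenic binary cubic forms (with leading coefficient normalized to $1$), the residue of $f_1$ modulo $3$ is an orbit invariant, and the orbit action on the coefficient $f_1$ is precisely translation by multiples of $3$.

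For existence and uniqueness of the distinguished representative, I would argue as follows. Every integer is congruent modulo $3$ to exactly one element of $\{-1, 0, 1\}$, so there is a unique $a \in \mathbb{Z}$ for which $f_1 + 3a \in \{-1, 0, 1\}$; applying $\gamma_a$ produces a representative with second coefficient in $\{-1,0,1\}$. For uniqueness, suppose two forms in the same $F(\mathbb{Z})$-orbit both have second coefficient in $\{-1,0,1\}$. Their difference in the second coefficient must simultaneously lie in $3\mathbb{Z}$ (since they are $F(\mathbb{Z})$-equivalent) and in $\{-2,-1,0,1,2\}$ (since each individual coefficient is in $\{-1,0,1\}$), forcing the difference to be $0$ and hence $a = 0$.

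For the congruence $f_1 \equiv J(f) \pmod 3$, I would simply compute modulo $3$:
\[
J(f) \;=\; -2 f_1^3 + 9 f_1 f_2 - 27 f_3 \;\equiv\; -2 f_1^3 \;\equiv\; f_1^3 \pmod 3,
\]
and then invoke Fermat's little theorem to conclude $f_1^3 \equiv f_1 \pmod 3$. Combining gives $J(f) \equiv f_1 \pmod 3$, as desired.

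There is no real obstacle here; the proof is essentially a bookkeeping exercise built on top of the coefficient transformation formula already derived. The only subtlety worth flagging is the tacit use of the standing convention (justified by Lemma~\ref{f_0}) that monogenic binary cubic forms are represented in the $x^3$-coefficient $= 1$ form, which is what allows the $F(\mathbb{Z})$-action to stay within the class of normalized monogenic forms and makes the statement about uniqueness meaningful.
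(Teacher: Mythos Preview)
Your proof is correct and follows essentially the same outline as the paper's: existence via the translation $f_1 \mapsto f_1 + 3a$, and the congruence $J(f) \equiv f_1 \pmod 3$ by reducing $-2f_1^3 + 9f_1f_2 - 27f_3$ modulo~$3$ and applying Fermat. The one place you diverge is in the uniqueness argument. The paper argues uniqueness by fixing $f_1' \in \{-1,0,1\}$ and then using the $F(\mathbb{Z})$-invariance of $I$ and $J$ (Lemma~\ref{ij_invariance}) to solve explicitly for $f_2'$ and $f_3'$ in terms of $I(f), J(f), f_1'$, showing there is only one possible reduced form. Your argument is more direct: you observe that two orbit representatives with $f_1 \in \{-1,0,1\}$ differ by some $\gamma_a$, so their $f_1$-coefficients differ by $3a \in 3\mathbb{Z} \cap \{-2,\dots,2\} = \{0\}$, forcing $a=0$ and hence $\gamma_a = \mathrm{id}$. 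This is cleaner and avoids invoking the $I,J$ invariants at all; the paper's route, on the other hand, has the side benefit of giving explicit formulas for $f_2', f_3'$ in terms of the invariants, which it reuses in the computational methodology of Section~3.
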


\begin{proof}

To prove the first part of the lemma, we will start by demonstrating that the binary cubic form $f(x,y)$, as described in the lemma, exists in any arbitrary $F(\mathbb{Z})$-equivalence class. Then, we will show that only one such binary cubic form exists in this equivalence class. 

We have shown in the proof of Lemma \ref{monogenic_preservation} that under the action of an $F(\mathbb{Z})$-matrix, the $x^2y$-coefficient $f_1$ of $f(x,y)$ becomes $f_1 + 3a$. Now, let $f_1' \in \{-1, 0, 1\}$ such that $f_1' \equiv f_1 \Mod 3$. We want to find an $a \in \mathbb{Z}$ such that $f_1 + 3a = f_1'$. We have $3a = f_1' - f_1$. Thus, $$a = \frac{f_1'-f_1}{3}$$

Since $f_1' \equiv f_1 \Mod 3$ – in other words, $f_1' - f_1 \equiv 0 \Mod 3$ – $a$ is always an integer. Hence, we are able to reduce the original binary cubic form to one whose $x^2y$-coefficient is $-1$, $0$, or $1$. 

Now that we have shown that such a binary cubic form exists, we will demonstrate that only one such binary cubic form exists. Let $f_2' = 3a^2 + 2f_1a + f_2$ and $f_3' = a^3 + f_1a^2 + f_2a + f_3$, where $a = \frac{f_1' - f_1}{3}$ and $f_1'$ is defined as above. Let $g(x,y)$ be the ``reduced" form of $f(x,y)$ – i.e. $g(x,y) = x^3 + f_1'x^2y + f_2'xy^2 + f_3'y^3$. 

By Lemma \ref{ij_invariance}, $I(f)$ and $J(f)$ are invariant in this $F(\mathbb{Z})$-equivalence class. Thus, we can derive the following equations: 

\[
\begin{cases}
I(f) = f_1'^2 - 3f_2' \implies f_2' = \frac{f_1'^2-I(f)}{3} \\
J(f) = -2f_1'^3+9f_1'f_2'-27f_3' \implies f_3' = -\frac{2f_1'^3}{27}+\frac{f_1'f_2'}{9}-\frac{J(f)}{27}
\end{cases}
\]

We can see from the above equations that there is only one possible value for $f_2'$ and $f_3'$. Thus, there is exactly one binary cubic form with $x^2y$-coefficient in the set $\{-1, 0, 1\}$ in every $F(\mathbb{Z})$-equivalence class. 

Finally, we show that $f_1 \equiv J(f) \Mod 3$.
\begin{eqnarray*}
J(f) & \equiv & -2f_1^3+9f_1f_2 - 27f_3 \\
& \equiv & -2f_1^3 \\
& \equiv & -2f_1 \\
& \equiv & f_1 \Mod 3
\end{eqnarray*}
\end{proof}

As before, we can treat any arbitrary monogenic binary cubic form as being in an $F(\mathbb{Z})$-equivalence class with a binary cubic form with $x^2y$-coefficient in the set $\{-1, 0, 1\}$. Thus, we can disregard all binary cubic forms with other $x^2y$-coefficients. 

\begin{lemma} \label{ij_podasip}
Let $f(x,y)$ and $g(x,y)$ be binary cubic forms with $x^3$-coefficient $1$. If $I(f) = I(g)$ and $J(f) = J(g)$, then $f$ and $g$ are $F(\mathbb{Z})$-equivalent. 
\end{lemma}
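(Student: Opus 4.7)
The plan is to use Lemma \ref{f_1} to reduce each of $f$ and $g$ to the unique representative of its $F(\mathbb{Z})$-orbit whose $x^2y$-coefficient lies in $\{-1,0,1\}$, and then show these canonical representatives coincide. Call these reduced forms $\tilde f(x,y) = x^3 + \tilde f_1 x^2 y + \tilde f_2 xy^2 + \tilde f_3 y^3$ and $\tilde g(x,y) = x^3 + \tilde g_1 x^2 y + \tilde g_2 xy^2 + \tilde g_3 y^3$, with $\tilde f_1, \tilde g_1 \in \{-1,0,1\}$. Since $f$ and $\tilde f$ are $F(\mathbb{Z})$-equivalent, the same holds for $g$ and $\tilde g$, and $F(\mathbb{Z})$-equivalence is an equivalence relation, it suffices to show $\tilde f = \tilde g$.

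First I would handle the leading (i.e.\ $x^2y$) coefficient. By Lemma \ref{ij_invariance}, the invariants $I$ and $J$ are preserved by $F(\mathbb{Z})$, so $I(\tilde f) = I(f) = I(g) = I(\tilde g)$ and likewise $J(\tilde f) = J(\tilde g)$. The congruence $\tilde f_1 \equiv J(\tilde f) \Mod 3$ established in Lemma \ref{f_1}, together with the analogous statement for $\tilde g$, then gives $\tilde f_1 \equiv \tilde g_1 \Mod 3$. Since $\{-1,0,1\}$ is a full system of residues modulo $3$, this forces $\tilde f_1 = \tilde g_1$.

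Next I would pin down the remaining two coefficients by inverting the formulas for $I$ and $J$. The equation $I(\tilde f) = \tilde f_1^2 - 3\tilde f_2$ together with $I(\tilde f) = I(\tilde g)$ and $\tilde f_1 = \tilde g_1$ yields $\tilde f_2 = \tilde g_2$. Substituting into $J(\tilde f) = -2\tilde f_1^3 + 9\tilde f_1 \tilde f_2 - 27 \tilde f_3$ and using $J(\tilde f) = J(\tilde g)$ with the equalities already obtained, the only remaining unknown difference is in the $\tilde f_3$ term, forcing $\tilde f_3 = \tilde g_3$. Hence $\tilde f = \tilde g$, and therefore $f$ and $g$ lie in the same $F(\mathbb{Z})$-orbit.

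There is no serious obstacle here; the argument is essentially a bookkeeping exercise that combines three ingredients already in hand: invariance of $I,J$ under $F(\mathbb{Z})$ (Lemma \ref{ij_invariance}), existence and uniqueness of a reduced representative with $f_1 \in \{-1,0,1\}$ (Lemma \ref{f_1}), and the explicit $\Mod 3$ link between $f_1$ and $J$ from the same lemma. The only subtlety worth stating carefully is the use of $\{-1,0,1\}$ as a residue system modulo $3$ in order to upgrade $\tilde f_1 \equiv \tilde g_1 \Mod 3$ to equality; everything else is linear algebra in the coefficients.
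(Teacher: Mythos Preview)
Your proof is correct. It differs from the paper's only in packaging: the paper constructs the $F(\mathbb{Z})$-element directly, choosing $a\in\mathbb{Z}$ with $g_1 = 3a + f_1$ (possible since $f_1\equiv g_1\equiv J\Mod 3$) and then verifying from $I(f)=I(g)$ and $J(f)=J(g)$ that $g_2$ and $g_3$ match the coefficients of $\gamma_a\cdot f$ computed in Lemma~\ref{monogenic_preservation}. You instead pass to the unique canonical representatives of Lemma~\ref{f_1} and show they agree. Both routes rest on the same two inputs, namely the congruence $f_1\equiv J\Mod 3$ and the fact that $I,J$ determine $f_2,f_3$ once $f_1$ is fixed; your version is a cleaner ``same canonical form'' argument, while the paper's is a hands-on construction of the transporter. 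Neither approach buys anything the other does not.
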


\begin{proof}
Let $f(x,y) = x^3 + f_1x^2y + f_2xy^2 + f_3y^3$, and let $g(x,y) = x^3 + g_1x^2y + g_2xy^2 + g_3y^3$. Moreover, suppose that $I(f) = I(g)$ and $J(f) = J(g)$. 

According to Lemma \ref{f_1}, $f_1 \equiv J \Mod 3$ and $g_1 \equiv J \Mod 3$ – hence, $f_1 \equiv g_1 \Mod 3$. We can pick an $a \in \mathbb{Z}$ such that $g_1 = 3a + f_1$. 

Since $I(f) = I(g)$, we have 
\begin{eqnarray*}
f_1^2 - 3f_2 &=& g_1^2 - 3g_2 \\
&=& (3a + f_1)^2 - 3g_2 \\
&=& 9a^2 + 6f_1a + f_1^2 - 3g_2
\end{eqnarray*}

After cancelling the $f_1^2$ terms and dividing both sides by $3$, we have 
$$-f_2 = 3a^2 + 2f_1a - g_2$$ 
Rearranging, we get
$$g_2 = 3a^2 + 2f_1a + f_2$$

Since $J(f) = J(g)$, we have 
\begin{eqnarray*}
-2f_1^3 + 9f_1f_2 - 27f_3 &=& -2(3a + f_1)^3 + 9(3a + f_1)(3a^2 + 2f_1a + f_2) - 27g_3\\
&=& -2(27 a^3 + 27 a^2 f_1 + 9 a f_1^2 + f_1^3) \\ 
&&+ 9(9 a^3 + 9 a^2 f_1 + 2 a f_1^2 + 3 a f_2 + f_1 f_2) - 27g_3\\
&=& -54 a^3 - 54 a^2 f_1 - 18 a f_1^2 - 2 f_1^3 \\
&&+ 81 a^3 + 81 a^2 f_1 + 18 a f_1^2 + 27 a f_2 + 9 f_1 f_2 - 27g_3 \\
&=& 27a^3 + 27a^2f_1 + 27af_2 - 27g_3
\end{eqnarray*}

Dividing both sides by $27$ and rearranging, we get 
$$g_3 = a^3 + a^2f_1 + af_2 + f_3$$

Combining the expressions that we have obtained for $g_1$, $g_2$, and $g_3$, we have the following: 

$$g(x,y) = x^3 + (3a + f_1)x^2y + (3a^2 + 2f_1a + f_2)xy^2 + (a^3 + f_1a^2 + f_2a + f_3)y^3$$

In the proof of Lemma \ref{monogenic_preservation}, we show that if $\gamma_a \in F(\mathbb{Z})$ such that $\gamma_a = \left(\begin{smallmatrix} 1 & 0 \\ a & 1 \end{smallmatrix}\right)$, then $$\gamma_a \cdot f(x,y) = x^3 + (3a + f_1)x^2y + (3a^2 + 2f_1a + f_2)xy^2 + (a^3 + f_1a^2 + f_2a + f_3)y^3$$

Thus, $g(x,y) = \gamma_a \cdot f(x,y)$, so $f$ and $g$ are $F(\mathbb{Z})$-equivalent. 

\end{proof}

Finally, we state a property of monogenic binary cubic forms that proves to be crucial in our computational methodology (see Section 3). 

\begin{theorem}[Theorem 2.2 of \cite{BHS}]\label{Thm2.2}
There is a natural bijection between $F(\mathbb{Z})$-equivalence classes of binary cubic forms with $x^3$-coefficient $1$ and isomorphism classes of monogenized cubic integral domains. 
\end{theorem}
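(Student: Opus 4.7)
The plan is to refine the Delone-Faddeev correspondence of Theorem \ref{df} by tracking a distinguished generator through both directions of the bijection. For the forward map, given a binary cubic form $f(x,y) = x^3 + f_1 x^2 y + f_2 xy^2 + f_3 y^3$ with leading coefficient $1$, equation (2) yields $\theta = \omega^2 + f_1 \omega + f_2 \in \mathbb{Z}[\omega]$, so $R(f) = \mathbb{Z}[\omega]$, and I can assign the monogenized cubic integral domain $(R(f), \omega)$. For the inverse map, given $(R, \alpha)$, the fact that $R = \mathbb{Z}[\alpha]$ has rank $3$ means $\{1, \alpha, \alpha^2\}$ is a $\mathbb{Z}$-basis, so there exist unique $c_1, c_2, c_3 \in \mathbb{Z}$ with $\alpha^3 + c_1 \alpha^2 + c_2 \alpha + c_3 = 0$; assign the form $x^3 + c_1 x^2 y + c_2 xy^2 + c_3 y^3$, which has $x^3$-coefficient $1$ by construction.

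The crucial well-definedness step is to track how the distinguished generator transforms under $F(\mathbb{Z})$. If $f' = \gamma_a \cdot f$, the coefficient computation in the proof of Lemma \ref{monogenic_preservation} together with Lemma \ref{Comp Methodology Step 2} gives the minimal polynomial $g'$ for the new generator $\omega'$ of $R(f')$ as $g'(y) = g(y+a)$, where $g(x) = x^3 + f_1 x^2 + f_2 x + f_3$. Hence $\omega' = \omega - a$ under the Delone-Faddeev isomorphism $R(f') \cong R(f)$, which matches exactly the isomorphism condition for monogenized cubic integral domains with $m = -a$. In the reverse direction, if $(R, \alpha) \cong (R', \alpha')$ via $\alpha \mapsto \alpha' + m$, then the minimal polynomial of $\alpha'$ is $p(x+m)$ where $p$ is that of $\alpha$, and the same substitution exhibits the two assigned forms as $F(\mathbb{Z})$-translates.

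Mutual inverseness is then immediate: the composition from form to ring and back returns the original $f$ because $\omega$ satisfies $g(x)$ by Lemma \ref{Comp Methodology Step 2}, and the composition in the other order is the identity by construction.

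The main obstacle is to confirm that no coarser identification sneaks in, that is, that the two equivalence relations match exactly rather than one being strictly finer than the other. Injectivity of the forward map requires showing that if the pairs assigned to $f$ and $f'$ are isomorphic as monogenized cubic integral domains, then $f$ and $f'$ are already $F(\mathbb{Z})$-equivalent, not merely $\GL_2(\mathbb{Z})$-equivalent. Concretely, I must analyze which $\gamma \in \GL_2(\mathbb{Z})$ outside $F(\mathbb{Z})$ can still send a form with $f_0 = 1$ to another with $f_0 = 1$, and verify that any such $\gamma$ induces a ring automorphism whose restriction to $\omega$ is not of the form $\omega \mapsto \omega + m$ (for instance, an automorphism involving a sign change on $\omega$). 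Handling this subtlety carefully — rather than glossing over it by invoking Delone-Faddeev alone — is where the real work of the proof lies.
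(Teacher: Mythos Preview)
The paper does not supply its own proof of this statement; it is quoted as Theorem~2.2 of \cite{BHS} and used as input, so there is no in-paper argument to compare against.

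That said, your outline is essentially the standard proof and is correct as far as it goes. The forward and backward constructions are the natural ones, and the computation $g'(x)=g(x+a)$ extracted from the proof of Lemma~\ref{monogenic_preservation} is precisely what shows that an $F(\mathbb{Z})$-translation of the form corresponds to an integer shift of the distinguished generator.

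Where you go astray is in the final paragraph. The ``main obstacle'' you raise is a phantom: once you have object-level maps that are mutual inverses and have checked that each equivalence relation is carried into the other (your second paragraph does both directions), the bijection on equivalence classes is automatic. Injectivity of the forward map on classes is exactly the statement that if $(R(f),\omega_f)\cong(R(f'),\omega_{f'})$ as monogenized domains then $f\sim_{F(\mathbb{Z})}f'$ --- and this is what your ``reverse direction'' already establishes, since an isomorphism sending $\omega_f\mapsto\omega_{f'}+m$ forces the minimal polynomials to differ by the substitution $x\mapsto x+m$, whence $f'=\gamma_{-m}\cdot f$. There is no need to analyze which $\gamma\in\GL_2(\mathbb{Z})\setminus F(\mathbb{Z})$ happen to preserve the condition $f_0=1$; the full $\GL_2(\mathbb{Z})$-action from Delone--Faddeev never enters once you work directly with the minimal-polynomial correspondence. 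Drop that paragraph and the argument is complete.
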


\subsection{Average $2$-Torsion Sizes of Class Groups of Cubic Fields}

Before stating Theorem 1.2 of \cite{BHS}, we define the following: 

\begin{definition}
Given a finite abelian group $G$ and a prime number $p$, the $p$-torsion subgroup of G, denoted as $G[p]$, is defined as follows: $$G[p] := \{g \in G \ | \ g^p = 1\}$$ Moreover, the \textbf{p-torsion size} of $G$ is defined to be the cardinality of the $p$-torsion subgroup of $G$. 
\end{definition}

A fact about $p$-torsion subgroups, which follows from Lagrange's Theorem, proves to be useful in our later discussion of the computational methodology:

\begin{lemma} \label{TTS}
Let $G$ be a finite abelian group. For a prime number $p$, $$G[p] = \{g \in G \ | \ g^p = g_0^{|G|} \quad \forall g_0 \in G\}$$
\end{lemma}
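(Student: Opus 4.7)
The plan is to reduce the right-hand side of the claimed equality to the definition of $G[p]$ by invoking Lagrange's theorem, which is essentially already advertised in the hint preceding the lemma. The key observation is that the quantifier "$\forall g_0 \in G$" on the right-hand side forces $g_0^{|G|}$ to collapse to a single group element, namely the identity, regardless of the choice of $g_0$.

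More concretely, I would first recall that for any element $g_0$ of a finite group $G$, the subgroup $\langle g_0 \rangle$ generated by $g_0$ has order equal to the order of $g_0$. By Lagrange's theorem, this order divides $|G|$, so write $|G| = k \cdot \mathrm{ord}(g_0)$ for some positive integer $k$. Then $g_0^{|G|} = (g_0^{\mathrm{ord}(g_0)})^k = 1^k = 1$. Hence $g_0^{|G|} = 1$ for every $g_0 \in G$.

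With this in hand, the condition "$g^p = g_0^{|G|}$ for all $g_0 \in G$" is equivalent to "$g^p = 1$", and the latter is precisely the definition of $G[p]$ given in the preceding definition. I would then conclude that both containments $G[p] \subseteq \{g \in G \mid g^p = g_0^{|G|} \ \forall g_0 \in G\}$ and the reverse inclusion hold trivially, completing the proof.

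There is no real obstacle here, the entire content is Lagrange's theorem; if anything, the only subtle point is simply noting that the universal quantifier over $g_0$ is what forces the right-hand condition to be equivalent to $g^p = 1$ (rather than $g^p$ being one specific fixed element), so I would make sure to highlight that step explicitly for the reader.
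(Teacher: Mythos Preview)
Your proposal is correct and matches the paper's approach: the paper does not actually write out a proof of this lemma but simply remarks that it ``follows from Lagrange's Theorem,'' which is precisely the argument you have supplied in detail. Your write-up fills in exactly what the paper leaves implicit, so there is nothing to add or compare.
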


Now we have enough theoretical background to state the theorems in question from \cite{B} and \cite{BHS}. First, the theorem from \cite{B}: 

\begin{theorem}[Theorem 5 of \cite{B}]\label{Bthm}
Let us denote the $2$-torsion subgroup of the class group of a cubic field $K$ as $Cl(K)[2]$. Then, 
\begin{itemize}
\item The average size, when ordering by discriminant, of $Cl(K)[2]$ for cubic fields $K$ of positive discriminant is $5/4$. 
\item The average size, when ordering by discriminant, of $Cl(K)[2]$ for cubic fields $K$ of negative discriminant is $3/2$. 
\end{itemize}
\end{theorem}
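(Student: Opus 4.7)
The plan is to follow Bhargava's parametrization-and-counting strategy: set up a bijection between $2$-torsion ideal classes of cubic rings and $\GL_2(\mathbb{Z}) \times \mathrm{SL}_3(\mathbb{Z})$-orbits on a suitable lattice of ``pairs of ternary quadratic forms,'' then count orbits asymptotically by geometry of numbers, and finally sieve down to maximal orders (i.e.\ cubic fields) and evaluate the resulting local densities. The Delone--Faddeev correspondence (Theorem \ref{df}) already gives us the denominator: the number of $\GL_2(\mathbb{Z})$-orbits of irreducible integral binary cubic forms with $|\mathrm{Disc}|<X$ is $\sim c^{\pm} X$ for explicit constants $c^{\pm}$ depending on the sign of the discriminant. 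The heart of the argument is supplying a matching asymptotic for the weighted count $\sum_K |Cl(K)[2]|$ over cubic $K$ with $|\mathrm{Disc}(K)|<X$.

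To parametrize the numerator, I would invoke the higher composition law that identifies $2$-torsion in the class group of a cubic ring $R$ with the $\GL_2(\mathbb{Z}) \times \mathrm{SL}_3(\mathbb{Z})$-orbits on $\mathbb{Z}^2 \otimes \mathrm{Sym}^2 \mathbb{Z}^3$ whose ``resolvent'' binary cubic form is the one attached to $R$ under Theorem \ref{df}. Concretely, a pair $(A,B)$ of integer ternary quadratic forms has a resolvent binary cubic $f_{A,B}(x,y) = 4\det(Ax - By)$ (up to normalization), and the fibre of $(A,B) \mapsto f_{A,B}$ over $f$ is in natural bijection with $Cl(R(f))[2]$ together with the identity class, modulo the relevant stabilizers. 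Reducible or identity contributions account for a constant $1$ on average, and the non-identity part is what must be estimated.

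Next I would count these pairs $(A,B)$ with bounded resolvent discriminant using geometry of numbers in the style of Bhargava's ``averaging over a fundamental domain'' technique: construct a fundamental domain $\mathcal{F}$ for $\GL_2(\mathbb{Z}) \times \mathrm{SL}_3(\mathbb{Z})$ acting on the real vector space, intersect with $|\mathrm{Disc}| < X$, and count lattice points via Davenport's lemma. The technical crux is controlling the cuspidal region where $\mathcal{F}$ is unbounded; there one shows that the ``extra'' lattice points coming from degenerate orbits (those not corresponding to genuine $2$-torsion in a cubic field's class group) contribute a negligible amount. This is the step I expect to be the main obstacle, since it requires a delicate analysis of which coordinates can be small and a proof that irreducible forms almost never live in the cusp.

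Finally, having obtained a main term of the shape $c_2^{\pm} X$ from the lattice-point count, I would sieve to restrict to maximal cubic orders using a uniformity estimate (a tail bound on the number of non-maximal $R$ with prescribed local behaviour at primes $p > M$, with $M$ tending to infinity slowly). This reduces the problem to a product of local densities. Evaluating these Euler factors: at each prime $p$, one computes the $p$-adic volume of pairs $(A,B)$ whose resolvent is a maximal $\mathbb{Z}_p$-cubic, divided by the corresponding volume for binary cubics alone. The ratio of volumes equals a simple local mass, and its global product telescopes to $1/4$ in the totally real case and $1/2$ in the complex case. Adding the identity $2$-torsion element yields the asserted averages $5/4$ and $3/2$. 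The cusp analysis and the uniform tail bound needed for the sieve are, to my mind, by far the hardest ingredients; everything else is the algebraic bookkeeping of the parametrization and a clean volume computation.
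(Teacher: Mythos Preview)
The paper you are working from does not actually prove Theorem~\ref{Bthm}; it merely \emph{states} it as a known result from the literature, citing Bhargava's quartic density paper \cite{B}. There is therefore no ``paper's own proof'' to compare your proposal against: the theorem functions here purely as background for the contrast with Theorem~\ref{BHSthm}, and the authors make no attempt to reprove it.

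That said, your sketch is a faithful outline of Bhargava's actual method in \cite{B}: the parametrization of index-$2$ ideal classes in cubic rings by $\GL_2(\mathbb{Z})\times\mathrm{SL}_3(\mathbb{Z})$-orbits on pairs of ternary quadratic forms, the averaging technique to count lattice points in a fundamental domain with bounded discriminant, the cusp analysis separating irreducible from degenerate orbits, and the sieve to maximal orders via a uniformity estimate. Your identification of the cusp bounds and the uniform tail estimate as the hardest steps is accurate. One small correction: the extra $1$ you add at the end does not come from ``the identity $2$-torsion element'' alone but from the count of \emph{reducible} orbits (those corresponding to the trivial class together with its $2$-covering structure), which Bhargava shows contribute exactly one on average per cubic field; the phrasing matters because the reducible count is itself a nontrivial lemma in \cite{B}. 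Otherwise the strategy is correct, but since the present paper supplies none of these ingredients, your proposal stands on its own as a summary of the external reference rather than as a comparison.
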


Note that in \cite{HSV}, the authors prove the same mean values, but when ordering by height. 

Now, the theorem from \cite{BHS}:  

\begin{theorem}[Theorem 1.2 of \cite{BHS}]\label{BHSthm}
Again, let us denote the $2$-torsion subgroup of the class group of a cubic field $K$ as $Cl(K)[2]$. Then, 
\begin{itemize}
\item The average size, when ordering by height, of $Cl(K)[2]$ for cubic fields $K$ having monogenized rings of integers with positive discriminant is 3/2. 
\item The average size, when ordering by height, of $Cl(K)[2]$ for cubic fields $K$ having monogenized rings of integers of negative discriminant is 2. 
\end{itemize}
\end{theorem}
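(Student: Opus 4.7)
The plan is to combine the Delone--Faddeev correspondence (Theorem \ref{df}) and Theorem \ref{Thm2.2} with a parametrization of $2$-torsion ideal classes by a coregular representation, then run a Bhargava-style orbit count weighted by the height $H$ of Definition \ref{height}. Specifically, I would use Bhargava's correspondence between $\GL_2(\mathbb{Z}) \times \mathrm{SL}_3(\mathbb{Z})$-orbits on the space of pairs of integer ternary quadratic forms and isomorphism classes of pairs $(R_4, R_3)$, where $R_4$ is a quartic ring and $R_3$ is its cubic resolvent; the fiber of this correspondence over a fixed cubic ring $R_3$ counts exactly the $2$-torsion ideal classes of $R_3$. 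Projecting through the resolvent map recovers the Delone--Faddeev parametrization of cubic rings by binary cubic forms, so the two parametrizations are fibered over each other.

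Restricting to the monogenized locus, I would invoke Theorem \ref{Thm2.2} to replace $\GL_2(\mathbb{Z})$-equivalence with $F(\mathbb{Z})$-equivalence on binary cubic forms with $x^3$-coefficient $1$. Computing the average of $|Cl(K)[2]|$ over monogenized cubic fields of height $< Y$ then reduces to comparing two counts: the count of $F(\mathbb{Z}) \times \mathrm{SL}_3(\mathbb{Z})$-orbits of pairs of ternary forms whose binary resolvent is monogenized and has height $< Y$, and the count of monogenized cubic fields of height $< Y$. Both are handled by constructing an explicit fundamental domain for the action on real points, computing its volume asymptotically in $Y$, and extracting the leading term via an orbit-counting principle. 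Because Lemma \ref{f_1} reduces $F(\mathbb{Z})$-equivalence to the three slices $f_1 \in \{-1,0,1\}$, the fundamental domain splits cleanly as a slab in the $(f_2, f_3)$-coordinates times an $\mathrm{SL}_3(\mathbb{Z})$-quotient in the ternary-form coordinates. The archimedean density at infinity depends on the sign of the discriminant, and this is what should produce the $3/2$ vs.\ $2$ dichotomy.

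The final ingredient is a $p$-adic sieve that passes from the count of pairs $(R_4, R_3)$ with $R_3$ an arbitrary monogenic cubic order to the count with $R_3 = \mathcal{O}_K$ maximal. This sieve requires uniform tail estimates on orbits of bounded height whose discriminant is divisible by $p^2$, so that the resulting Euler product of local densities can be evaluated term-by-term. The main obstacle, I expect, is precisely this uniform sieve combined with the cusp analysis of the fundamental domain: restricting to $f_0 = 1$ reshapes the cusp and alters the $p$-adic densities at every prime, which is exactly what lifts the average from Theorem \ref{Bthm}'s values of $5/4$ and $3/2$ to the monogenic values $3/2$ and $2$. Showing that the cuspidal contribution is negligible under the height ordering (rather than under the usual discriminant ordering) and that the local densities obey sharp enough upper bounds to be summed against $p$ are the two places where estimates beyond those of \cite{B, HSV} are genuinely needed.
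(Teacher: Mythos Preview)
Your proposal sketches a plausible geometry-of-numbers argument in the style of Bhargava, and indeed this is roughly the shape of the argument in the cited paper \cite{BHS}. However, the present paper does not prove Theorem~\ref{BHSthm} at all: the theorem is quoted verbatim as ``Theorem 1.2 of \cite{BHS}'' and the paper's contribution is purely computational. Immediately after stating the theorem the author writes ``This is the result that we provide computational evidence for in this paper,'' and Sections~3--4 carry this out by enumerating monogenic binary cubic forms up to a height bound in SageMath, computing class groups directly, and then fitting the resulting averages with Eureqa to recover the constants $3/2$ and $2$ empirically.

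So there is no proof in the paper to compare against. Your outline is not wrong as a description of how one would attack the theorem theoretically, but it bears no relation to what the paper actually does; the paper treats Theorem~\ref{BHSthm} as an established external input and verifies it numerically rather than deriving it. If you were asked to reproduce the paper's treatment of this statement, the correct response would be to note that it is cited without proof from \cite{BHS}, and that the paper's own work consists of the computational methodology of Section~3 together with the data and regression fits of Section~4.
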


This is the result that we provide computational evidence for in this paper. 

It is surprising that the average $2$-torsion sizes from Theorem \ref{Bthm} change when we mandate that the cubic fields in question have monogenized rings of integers. Based on local behavior, we would expect these average $2$-torsion sizes not to change under the added restriction.

Before we continue, we introduce some notation that can be used to express the above theorem: 

\begin{definition}
For some $Y \in \mathbb{Z}$, $Y \geq 0$, let $\mathcal{F}^+_Y$ denote the set of positive-discriminant, maximal, and monogenized cubic integral domains $(R,\alpha)$ with height less than $Y$. Similarly, let $\mathcal{F}^-_Y$ denote the set of negative-discriminant, maximal, and monogenized cubic integral domains $(R,\alpha)$ with height less than $Y$. 

Let $K$ denote the fraction field $\textrm{Frac}(R)$. Then, for $p$ prime, we define the following:  

\noindent\begin{minipage}{.5\linewidth}
\begin{equation}
\mu_p(\mathcal{F}^{+}(Y)) := \frac{\sum\limits_{(R,\alpha)\in\mathcal{F}^+_Y}|Cl(K)[p]|}{|\mathcal{F}^+_Y|} 
\end{equation}
\end{minipage}%
\begin{minipage}{.5\linewidth}
\begin{equation}
\mu_p(\mathcal{F}^{-}(Y)) := \frac{\sum\limits_{(R,\alpha)\in\mathcal{F}^-_Y}|Cl(K)[p]|}{|\mathcal{F}^-_Y|}
\end{equation}
\end{minipage}

\end{definition}

With this new notation, Theorem \ref{BHSthm} can be rewritten as

\noindent\begin{minipage}{.5\linewidth}
$$\lim_{Y \rightarrow \infty} \mu_2(\mathcal{F}^{+}(Y)) = 3/2$$
\end{minipage}%
\begin{minipage}{.5\linewidth}
$$\lim_{Y \rightarrow \infty} \mu_2(\mathcal{F}^{-}(Y)) = 2$$
\end{minipage}

\section{Computational Methodology}
Our computational methodology for verifying Theorem \ref{BHSthm} is summarized below. Our goal is to compute the averages $\mu_2(\mathcal{F}^{+}(Y))$ and $\mu_2(\mathcal{F}^{-}(Y))$ for very large values of $Y$. 

\begin{itemize}
\item{\textbf{Step 1: Generate all monogenic binary cubic forms of bounded height and positive/negative discriminant}}

We can assume that any monogenic binary cubic form $f(x,y)$ in our computation has $f_0 = 1$ by Lemma \ref{f_0}. Moreover, by Lemma \ref{f_1}, we can assume that $f_1 \in \{-1, 0, 1\}$ and $f_1 \equiv J(f) \Mod 3$. By looping through all of the possible values of $I$ and $J$, we can see that every $(I,J)$ pair corresponds to a single $(f_0,f_1)$ pair (by Lemma \ref{ij_podasip}). This approach requires us to impose bounds on $I$ and $J$ so that the number of binary cubic forms we generate remains finite. Upon calculation, we find that given a positive integer $Y$, for all binary cubic forms $f(x,y)$ such that $H(f) < Y$, $|I| < \sqrt[3]{Y}$ and $|J| < 2\sqrt{Y}$. 

We now construct the following nested loop: We loop from $I=-\lfloor{\sqrt[3]{Y}\rfloor}$ to $I=\lfloor{\sqrt[3]{Y}\rfloor}$, and for each individual value of $I$ in this loop, we loop from $J=-2\lfloor{\sqrt{Y}\rfloor}$ to $J=2\lfloor{\sqrt{Y}\rfloor}$. For any $(I,J)$ pair, we will always have $f_0=1$ and $f_1 = -1$, $0$, or $1$ depending on the value of $J$. According to Definition \ref{height}, we also have 
\[
\begin{cases}
I = f_1^2 - 3f_2 \implies f_2 = \frac{f_1^2-I}{3} \\
J = -2f_1^3+9f_1f_2-27f_3 \implies f_3 = -\frac{2f_1^3}{27}+\frac{f_1f_2}{9}-\frac{J}{27}
\end{cases}
\]

If $f_2, f_3$ are integers, then we have generated a valid binary cubic form; otherwise, the $(I,J)$ pair does not have a corresponding binary cubic form. 

\item \textbf{Step 2: Calculate the fraction field $K$ of the monogenized cubic integral domain corresponding to each monogenic binary cubic form}: Before starting this step, note that we can only construct a correspondence between monogenized cubic integral domains and binary cubic forms with $x^3$-coefficient $1$ and $x^2y$-coefficient in the set $\{-1, 0, 1\}$ because of Theorem \ref{Thm2.2}, together with Remark \ref{remark}. 


We now have a large list of binary cubic forms that we must loop through. Given a specific binary cubic form within our loop, we can find the minimal polynomial for the corresponding fraction field using Lemma \ref{Comp Methodology Step 2}. But since the binary cubic forms we are concerned with are all monogenic, by Lemma \ref{f_0}, we can simplify the minimal polynomial for the fraction field to the following: 
$$g(x) = x^3 + f_1x^2 + f_2x + f_3$$
We can now compute the fraction field corresponding to the given binary cubic form. (Note that if the above polynomial is reducible, the fraction field is no longer a field – thus, the corresponding binary cubic form will be ejected from the computation.)

\item \textbf{Step 3: Ensure that each binary cubic form is maximal}: Within our large binary cubic form loop described in Step 2, we must ensure that every binary cubic form's corresponding cubic integral domain is maximal inside its fraction field. In order to do so, we must compute the discriminant of each binary cubic form and its corresponding fraction field, and then ensure that the two discriminants are equal. This will imply that the cubic integral domain corresponding to each binary cubic form is isomorphic to the ring of integers inside the binary cubic form's fraction field, implying that the cubic integral domain is indeed maximal. Any binary cubic form whose cubic integral domain is \textit{not} maximal will be ejected from the computation – otherwise, it will remain until the end of the computation. 

\item \textbf{Step 4: Find the class group of each fraction field $K$ resulting from Step 2}: We do this using SageMath \cite{S}. 

\item \textbf{Step 5: Calculate the number of 2-torsion elements in class group of each fraction field $K$}: By Lemma \ref{TTS}, we simply have to find all the elements of $$Cl(K)[2] = \{g \in Cl(K) \ | \ g^2 = e =  g_0^{|Cl(K)|}\}$$ where $e$ is the multiplicative identity of $Cl(K)$ and $g_0$ is an arbitrary element of $Cl(K)$. We perform this computation for every surviving fraction field $K$ in the large binary cubic form loop described in Step 2. 

\item \textbf{Step 6: Compute averages $\mu_2(\mathcal{F}^{+}(Y))$ and $\mu_2(\mathcal{F}^{-}(Y))$}
\end{itemize}

Our ultimate goal now is to increase the height restriction $Y$ from Step 1 to a large enough number so that we can predict the values of $\mu_2(\mathcal{F}^{+}(Y))$ and $\mu_2(\mathcal{F}^{-}(Y))$. We were able to reach a $Y$-value of 100 billion for positive-discriminant monogenic binary cubic forms, and 10 billion for negative-discriminant monogenic binary cubic forms. (The limiting factor here was computation time – our longest run took about a week to complete.) 

\section{Computational Data Towards Theorem \ref{BHSthm}}

In this section, we present our computational data on the average $2$-torsion size of class groups of monogenized cubic fields, ordered by height. First, we will present two propositions from \cite{BHS} and compare them with computational data. 

\subsection{Asymptotics on Binary Cubic Form Counts}

The following proposition is equivalent to Proposition 3.2 of \cite{BHS}. 

\begin{proposition} \label{bcf-counts} Let the number of $F(\mathbb{Z})$-equivalence classes of \textbf{positive-discriminant} monogenic binary cubic forms with height less than $Y$ be $N^+(Y)$, and let the number of $F(\mathbb{Z})$-equivalence classes of \textbf{negative-discriminant} monogenic binary cubic forms with height less than $Y$ be $N^-(Y)$. Then, 
$$N^+(Y) = \frac{8}{135}Y^{5/6} + O(Y^{1/2+\epsilon})$$
$$N^-(Y) = \frac{32}{135}Y^{5/6} + O(Y^{1/2+\epsilon})$$
\end{proposition}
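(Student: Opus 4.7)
The plan is to convert the problem into a lattice-point count in the $(I,J)$-plane, using the bijections of the preceding lemmas, and then evaluate the resulting area and error. First I would set up the bijection explicitly: by Lemma \ref{f_0} every monogenic binary cubic form is $\GL_2(\mathbb{Z})$-equivalent to one with $f_0 = 1$; by Lemma \ref{f_1} each $F(\mathbb{Z})$-equivalence class of such forms contains a unique representative with $f_1 \in \{-1, 0, 1\}$, where $f_1 \equiv J \Mod 3$; and by Lemma \ref{ij_podasip} any two forms with $f_0 = 1$ and the same $(I,J)$ are $F(\mathbb{Z})$-equivalent. Inverting $I = f_1^2 - 3f_2$ and $J = -2f_1^3 + 9f_1 f_2 - 27 f_3$ gives
\[
f_2 \;=\; \frac{f_1^2 - I}{3}, \qquad f_3 \;=\; -\,\frac{f_1^3 + f_1 I + J}{27},
\]
so the integrality of the coefficients amounts to $I \equiv f_1^2 \Mod 3$ and $J \equiv -f_1^3 - f_1 I \Mod{27}$. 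A case check on $f_1 \in \{-1,0,1\}$ shows the valid $(I,J)$ pairs split into three disjoint affine sublattices of $\mathbb{Z}^2$ (distinguished by $J \Mod 3$) of total density $1/27$.

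Next I would cut out the height region. Applying the depressing substitution $x \mapsto x - f_1/3$ to $g(x) = x^3 + f_1 x^2 + f_2 x + f_3$ yields $27\,\mathrm{Disc}(f) = 4I^3 - J^2$, so
\[
\mathcal{R}^+(Y) \;=\; \{(I,J) \in \mathbb{R}^2 : J^2 < 4I^3,\ I^3 < Y\}
\]
is the positive-discriminant region (note $J^2 < 4I^3$ forces $I>0$, and then $J^2/4 < I^3 < Y$ automatically), and $\mathcal{R}^-(Y)$ is the complement inside the height rectangle $\{|I| < Y^{1/3},\ |J| < 2Y^{1/2}\}$. Direct integration gives
\begin{align*}
\mathrm{Area}(\mathcal{R}^+(Y)) &\;=\; \int_0^{Y^{1/3}} 4\,I^{3/2}\,dI \;=\; \tfrac{8}{5}\,Y^{5/6}, \\
\mathrm{Area}(\mathcal{R}^-(Y)) &\;=\; 2\,Y^{1/3} \cdot 4\,Y^{1/2} - \tfrac{8}{5}\,Y^{5/6} \;=\; \tfrac{32}{5}\,Y^{5/6},
\end{align*}
and multiplying each by the density $1/27$ recovers the claimed main terms $\tfrac{8}{135}\,Y^{5/6}$ and $\tfrac{32}{135}\,Y^{5/6}$.

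Finally I would control the error. A Davenport-type lattice-point count on each of the three affine sublattices contributes $O(Y^{1/2})$, bounded by the longer ($J$) side of the height rectangle, since the region is semi-algebraic with bounded-degree defining equations. One must also subtract the reducible forms, as Delone-Faddeev requires irreducibility for $R(f)$ to be an integral domain; writing such a form as $(x-n)(x^2 + bx + c)$, the constraints $|f_2| \ll Y^{1/3}$ and $|f_3| \ll Y^{1/2}$ force $|n| \ll Y^{1/6}$ (otherwise $c \approx n^2$ would make $|f_2|$ too large), and a divisor-bound argument then yields at most $O(Y^{1/2+\epsilon})$ reducible forms. The main technical obstacle will be verifying that near the cuspidal boundary $J^2 = 4I^3$ the lattice-point error really is $O(Y^{1/2})$ and not worse; this can be done either by a direct application of Davenport's lemma to the semi-algebraic region, or by decomposing $\mathcal{R}^\pm$ into horizontal strips $|J| \in [2^k, 2^{k+1}]$ and applying a one-dimensional lattice count within each.
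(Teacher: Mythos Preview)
The paper does not give its own proof of this proposition; it simply records it as equivalent to Proposition~3.2 of \cite{BHS}. Your outline is correct and is essentially the standard argument (and presumably the one in \cite{BHS}): use Lemmas~\ref{f_1} and~\ref{ij_podasip} to identify $F(\mathbb{Z})$-classes of forms with $f_0=1$ with admissible $(I,J)$-pairs, observe that these form a union of three affine sublattices of total density $1/27$ in $\mathbb{Z}^2$, compute the areas $\tfrac{8}{5}Y^{5/6}$ and $\tfrac{32}{5}Y^{5/6}$ of the regions cut out by $\mathrm{sgn}(4I^3-J^2)$ inside the height rectangle, and bound the lattice-point error by Davenport's lemma via the $J$-projection of length $O(Y^{1/2})$.

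Two minor corrections that do not affect the conclusion. First, substituting $f_2=(f_1^2-I)/3$ into $J=-2f_1^3+9f_1f_2-27f_3$ gives $f_3=(f_1^3-3f_1I-J)/27$, not $-(f_1^3+f_1I+J)/27$; for $f_1\in\{-1,0,1\}$ this still fixes a single residue class of $J$ modulo $27$ for each admissible $I$, so the density $1/27$ stands. Second, the reducible count does not need a divisor bound or the $\epsilon$: once you have $|n|\ll Y^{1/6}$, the value of $f_1\in\{-1,0,1\}$ determines $b=n+f_1$, and then $c$ is determined by $f_2$, which ranges over $O(Y^{1/3})$ integers, giving $O(Y^{1/6}\cdot Y^{1/3})=O(Y^{1/2})$ reducible forms directly.
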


Figure 1 below shows $N^+(Y)$ in a dotted blue line (computed by our program) and $\frac{8}{135}Y^{5/6}$ in a solid red line. Figure 2 shows $N^-(Y)$ in a dotted blue line (also computed by our program) and $\frac{32}{135}Y^{5/6}$ in a solid red line. In Figure 1, $Y$ goes up to $10^{11}$; in Figure 2, $Y$ goes up to $10^{10}$. 

\begin{figure}[H]
\begin{center}
\pgfplotstableread{bcf_gen_pos.txt}
	\datatable
\begin{tikzpicture}[scale = 1.15]
\begin{axis}[xlabel={$Y$},
xlabel style = {font=\tiny,xshift=0.5ex},
scaled y ticks = false, 
yticklabel style = {font=\fontsize{7}{4}\selectfont,xshift=0.5ex},
scaled x ticks = false,
xticklabel style = {font=\fontsize{7}{4}\selectfont,yshift=0.5ex}, 
legend pos = north west, 
xticklabels = {$0$, $1$, $2 \cdot 10^{10}$, $4 \cdot 10^{10}$, $6 \cdot 10^{10}$, $8 \cdot 10^{10}$, $1 \cdot 10^{11}$}]
\addplot[color=blue,very thick, dotted]
	table[y = BCF+]{\datatable};
\addlegendentry{\fontsize{5}{4}\selectfont{$N^+(Y)$}};
\addplot[color=red]
	table[y = Theoretical]{\datatable};
\addlegendentry{\fontsize{5}{4}\selectfont{$\frac{8}{135}Y^{5/6}$}};
\end{axis}
\end{tikzpicture}

\caption{\centering \footnotesize The number of positive-discriminant monogenic binary cubic forms bounded by height $Y$, compared to the equation $\frac{8}{135}Y^{5/6}$}
\end{center}
\end{figure}
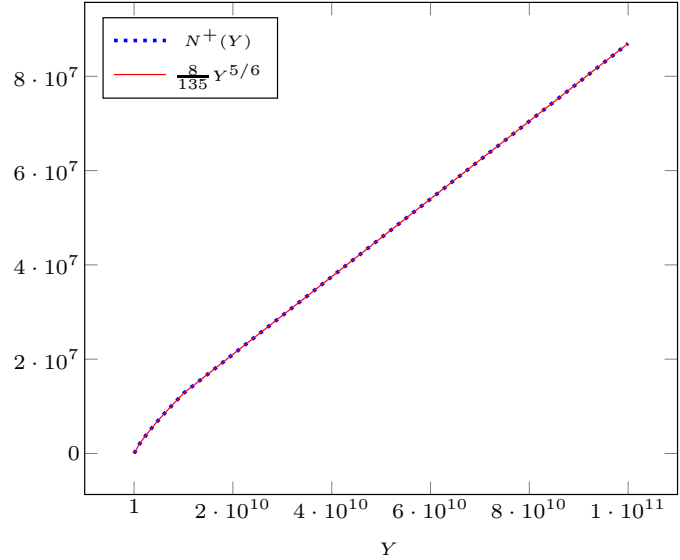

\begin{figure}[H]
\begin{center}
\pgfplotstableread{bcf_gen_neg.txt}
	\datatable
\begin{tikzpicture}[scale = 1.1]
\begin{axis}[xlabel={$Y$},
xlabel style = {font=\tiny,xshift=0.5ex},
scaled y ticks = false, 
yticklabel style = {font=\fontsize{7}{4}\selectfont,xshift=0.5ex},
scaled x ticks = false, 
xticklabel style = {font=\fontsize{7}{4}\selectfont,yshift=0.5ex}, 
legend pos = north west, 
xticklabels = {$0$, $1$, $2 \cdot 10^9$, $4 \cdot 10^9$, $6 \cdot 10^9$, $8 \cdot 10^9$, $1 \cdot 10^{10}$}]
\addplot[color=blue, very thick, dotted]
	table[y = BCF-]{\datatable};
\addlegendentry{\fontsize{5}{4}\selectfont{$N^-(Y)$}};
\addplot[color=red]
	table[y = Theoretical]{\datatable};
\addlegendentry{\fontsize{5}{4}\selectfont{$\frac{32}{135}Y^{5/6}$}};
\end{axis}
\end{tikzpicture}
\caption{\centering \footnotesize The number of negative-discriminant monogenic binary cubic forms bounded by height $Y$, compared to the equation $\frac{32}{135}Y^{5/6}$}
\end{center}
\end{figure}
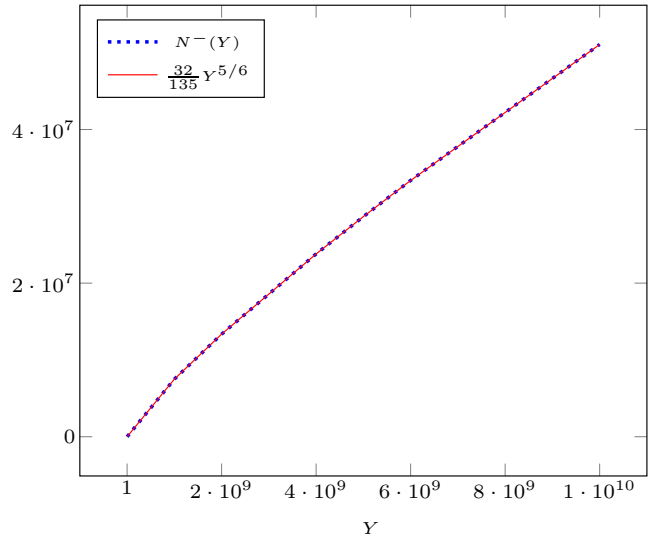

Note that in both Figure 1 and Figure 2, from a visual standpoint, the theoretical and computational results are in agreement. Below is a more detailed error analysis. 

For $Y=10^{11}$, our program calculated that $N^+(Y) = 86,961,377$. The expected theoretical value of $N^+(Y)$ is $\frac{8}{135}(10^{11})^{5/6} = 86,980,697.341$, implying our calculated value has an error of $0.022\%$.

For $Y=10^{10}$, our program calcualted that $N^-(Y) = 51,074,450$. The expected theoretical value of $N^-(Y)$ is $\frac{32}{135}(10^{10})^{5/6} = 51,068,081.542$, implying our calculated value has an error of $0.012\%$. 

From these low error percentages, it is clear that our computational results quickly agree with the corresponding theoretical results. 

\subsection{Asymptotics on Binary Cubic Form Maximality Ratios}

The following proposition can be deduced from Proposition \ref{bcf-counts} above, in conjunction with Theorem 3.6 of \cite{BHS}. 

\begin{proposition}
Let $N_{max}^{+}(Y)$ be the number of $F(\mathbb{Z})$-equivalence classes of \textbf{maximal} \textbf{positive-discriminant} binary cubic forms with height less than $Y$, and let $N_{max}^{-}(Y)$ be the number of $F(\mathbb{Z})$-equivalence classes of \textbf{maximal} \textbf{negative-discriminant} binary cubic forms with height less than $Y$. In addition, let $N^+(Y)$, $N^-(Y)$ be as defined in Proposition \ref{bcf-counts}. Then, 
$$\frac{N_{max}^{+}(Y)}{N^{+}(Y)}=\frac{N_{max}^{-}(Y)}{N^{-}(Y)}=\frac{1}{\zeta(2)}$$
\end{proposition}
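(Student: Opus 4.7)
The plan is to combine Proposition~\ref{bcf-counts} with Theorem 3.6 of~\cite{BHS} and then form the quotient. Both equalities in the statement should be read in the limit $Y \to \infty$. Proposition~\ref{bcf-counts} already supplies the main terms $N^{\pm}(Y) = c^{\pm} Y^{5/6} + O(Y^{1/2+\epsilon})$ with $c^+ = 8/135$ and $c^- = 32/135$, so the only new input required is an asymptotic for $N_{max}^{\pm}(Y)$ of the same order $Y^{5/6}$ but multiplied by an extra factor of $1/\zeta(2)$.

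This extra factor is an Euler product of local densities. The cubic ring $R(f) = \mathbb{Z}[\alpha]$ is maximal in its fraction field $K$ exactly when the index $[\mathcal{O}_K : \mathbb{Z}[\alpha]]$ is coprime to every prime $p$, i.e.\ when $R(f)$ is $p$-maximal at every $p$. For each prime $p$, the proportion of $F(\mathbb{Z})$-equivalence classes of monogenic binary cubic forms whose associated ring fails to be $p$-maximal is $1/p^2$; hence the local density of $p$-maximality is $1 - 1/p^2$, and the Euler product over all primes is $\prod_p (1 - p^{-2}) = 1/\zeta(2)$. Assuming these local conditions decouple in the limit, we obtain
$$N_{max}^{\pm}(Y) \;=\; \frac{1}{\zeta(2)}\, N^{\pm}(Y) + o\!\left(Y^{5/6}\right),$$
and dividing by $N^{\pm}(Y) \asymp Y^{5/6}$ yields the claimed limiting ratio, uniformly in the sign of the discriminant.

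The genuine content — and the main obstacle — is justifying that these infinitely many local conditions can be imposed simultaneously with only a lower-order error. Naively cutting off at a prime $M = M(Y)$ leaves a tail where one must uniformly bound the number of monogenic binary cubic forms of height less than $Y$ whose ring is non-$p$-maximal for some prime $p > M$. Controlling this tail is precisely the role of Theorem 3.6 of~\cite{BHS}, whose quantitative squarefree sieve (of Ekedahl/Bhargava type, adapted to the $F(\mathbb{Z})$-action on forms with $x^3$-coefficient equal to $1$) supplies the required uniform estimate. Since the excerpt allows us to invoke this theorem directly, the remainder of the proof reduces to quoting Theorem 3.6, combining it with the leading asymptotic of Proposition~\ref{bcf-counts}, and performing the division.
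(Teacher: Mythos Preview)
Your proposal is correct and matches the paper's own treatment: the paper does not write out a proof but simply states that the proposition ``can be deduced from Proposition~\ref{bcf-counts} above, in conjunction with Theorem~3.6 of~\cite{BHS},'' which is exactly the combination you invoke. You have in fact gone beyond the paper by spelling out why the local density of $p$-maximality is $1-1/p^2$ and why a uniform tail estimate is needed; the paper leaves both of these implicit in its citation of Theorem~3.6 of~\cite{BHS}.
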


Figure 3 below demonstrates the convergence of both $\frac{N_{max}^{+}(Y)}{N^{+}(Y)}$ and $\frac{N_{max}^{-}(Y)}{N^{-}(Y)}$ to $\frac{1}{\zeta(2)}$.

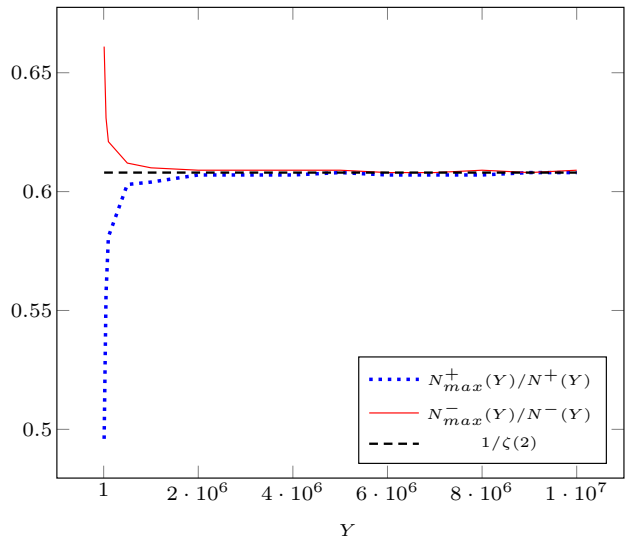
\begin{figure}[H]
\begin{center}
\pgfplotstableread{max_ratios.txt}
	\datatable
\begin{tikzpicture}[scale = 1.1]
\begin{axis}[xlabel={$Y$},
xlabel style = {font=\tiny,xshift=0.5ex},
scaled y ticks = false, 
yticklabel style = {font=\fontsize{7}{4}\selectfont,xshift=0.5ex},
scaled x ticks = false,
xticklabel style = {font=\fontsize{7}{4}\selectfont,yshift=0.5ex}, 
legend pos = south east, 
xticklabels = {$0$, $1$, $2 \cdot 10^6$, $4 \cdot 10^6$, $6 \cdot 10^6$, $8 \cdot 10^6$, $1 \cdot 10^7$}]
\addplot[color=blue, very thick, dotted]
	table[y = Max+]{\datatable};
\addlegendentry{\fontsize{5}{4}\selectfont{$N_{max}^{+}(Y) / N^{+}(Y)$}};
\addplot[color=red]
	table[y = Max-]{\datatable};
\addlegendentry{\fontsize{5}{4}\selectfont{$N_{max}^{-}(Y) / N^{-}(Y)$}};
\addplot[color=black, thick, densely dashed]
	table[y = Obj]{\datatable};
\addlegendentry{\fontsize{5}{4}\selectfont{$1 / \zeta(2)$}};
\end{axis}
\end{tikzpicture}
\caption{\centering \footnotesize The fractions of positive-discriminant and negative-discriminant monogenic binary cubic forms that are maximal, compared with $\frac{1}{\zeta(2)}$}
\end{center}
\end{figure}


In Figure 3, as early as $Y = 2 \cdot 10^{6}$ , we have 


$$\dfrac{N_{max}^{+}(Y)}{N^{+}(Y)} = \dfrac{6318}{10405} = 0.607$$

$$\dfrac{N_{max}^{-}(Y)}{N^{-}(Y)} = \dfrac{25662}{42143} = 0.609$$

Both of these values are within $0.2$ percent of $\frac{1}{\zeta(2)}$. 


In order to ensure that a maximality constraint on our binary cubic forms would not introduce too much additional error to our computation, let us perform an error analysis similar to the one we conducted at the end of Section 4.1, with the requirement that our binary cubic forms be maximal. 

For $Y = 2 \cdot 10^{6}$, our program calculated that $N_{max}^{+}(Y) = 6,318$. The expected theoretical value of $N_{max}^{+}(Y)$ is $\frac{8}{135\zeta(2)}(2 \cdot 10^{6})^{5/6} = 6,418.980$, implying our calculated value has an error of $1.573\%$.

For the same value of $Y$, our program calculated that $N_{max}^{-}(Y) = 25,662$. The expected theoretical value of $N_{max}^{-}(Y)$ is $\frac{32}{135\zeta(2)}(2 \cdot 10^{6})^{5/6} = 25,675.922$, implying our calculated value has an error of $0.043\%$. 

Since these error percentages are once again low, we are assured that our computational results agree with the corresponding theoretical predictions. 

\subsection{Asymptotic Averages of Binary Cubic Forms}

We are now ready to give evidence towards Theorem \ref{BHSthm}. The computational verification process relied heavily on Eureqa, a computer application that employs genetic programming \cite{K} to find regression curves that optimally fit the data given. See section 5.1 for more details on how Eureqa works.

Figures 4 and 5 below display $\mu_2(\mathcal{F}^{+}(Y))$ and $\mu_2(\mathcal{F}^{-}(Y))$ with respect to $Y$: 


\begin{figure}[H]
\begin{center}
\pgfplotstableread{pos_avgs2.txt}
	\datatable
{
\begin{tikzpicture}[scale = 1.1]
\begin{axis}[xlabel={$Y$},
xlabel style = {font=\tiny,xshift=0.5ex},
scaled x ticks = false,
xticklabel style = {font=\fontsize{7}{4}\selectfont,yshift=0.5ex}, 
ylabel={$\mu_2(\mathcal{F}^{+}(Y))$},
ylabel style = {font=\tiny,xshift=0.5ex},
scaled y ticks = false, 
yticklabel style = {font=\fontsize{7}{4}\selectfont,xshift=0.5ex},
legend pos = north west, 
xticklabels = {$0$, $1$, $2 \cdot 10^{10}$, $4 \cdot 10^{10}$, $6 \cdot 10^{10}$, $8 \cdot 10^{10}$, $1 \cdot 10^{11}$}]
\addplot[color=blue]
	table[y = N2]{\datatable};
\addplot[color=black, domain = 0:100000000000]{1.25};
\addplot[color=red, domain = 0:100000000000]{1.5};
\node[circle,fill,inner sep=1.5pt] at (axis cs:787702088,1.25) {};
\node[label={270:{\scriptsize{$Y = 787,702,088$}}}] at (axis cs:21000000000,1.26) {}; 
\end{axis}
\end{tikzpicture}
}
\caption{\centering \footnotesize $\mu_2(\mathcal{F}^{+}(Y))$, compared to the general binary cubic form asymptote in \cite{B} (in black), as well as the monogenic binary cubic form asymptote predicted in \cite{BHS} (in red)}
\end{center}
\end{figure}

\begin{figure}[H]
\centering
\pgfplotstableread{neg_avgs.txt}
	\datatable
{
\begin{tikzpicture}[scale = 1.1]
\begin{axis}[xlabel={$Y$},
xlabel style = {font=\tiny,xshift=0.5ex},
scaled x ticks = false, 
xticklabel style = {font=\fontsize{7}{4}\selectfont,yshift=0.5ex}, 
ylabel={$\mu_2(\mathcal{F}^{-}(Y))$},
ylabel style = {font=\tiny,xshift=0.5ex},
scaled y ticks = false, 
yticklabel style = {font=\fontsize{7}{4}\selectfont,xshift=0.5ex},
legend pos = north west, 
xticklabels = {$0$, $1$, $2 \cdot 10^9$, $4 \cdot 10^9$, $6 \cdot 10^9$, $8 \cdot 10^9$, $1 \cdot 10^{10}$}]
\addplot[color=blue]
	table[y = N2]{\datatable};
\addplot[color=black, domain = 0:10000000000]{1.5};
\addplot[color=red, domain = 0:10000000000]{2};
\node[circle,fill,inner sep=1.5pt] at (axis cs:17382351,1.5) {};
\node[label={270:{\scriptsize{$Y = 17,382,351$}}}] at (axis cs:2000000000,1.525) {};
\end{axis}
\end{tikzpicture}
}
\caption{\centering \footnotesize $\mu_2(\mathcal{F}^{-}(Y))$, compared to the general binary cubic form asymptote in \cite{B} (in black), as well as the monogenic binary cubic form asymptote predicted in \cite{BHS} (in red)}
\end{figure} 

Note that upon examination of the above two graphs, we can instantly determine that $\lim_{Y \rightarrow \infty} \mu_2(\mathcal{F}^{+}(Y))$ and $\lim_{Y \rightarrow \infty} \mu_2(\mathcal{F}^{-}(Y))$ are different from the asymptotic averages in Theorem \ref{Bthm} (where no monogenicity condition is imposed). This is undoubtedly a surprising result. 

After we inserted all of the data comprising the above two graphs into a spreadsheet, Eureqa formulated the following pair of equations nearly instantaneously:


\[
\begin{cases}
\mu_2(\mathcal{F}^{+}(Y)) \approx 1.51-1.34Y^{-0.0810} \\ 
\mu_2(\mathcal{F}^{-}(Y)) \approx 1.99-2.05Y^{-0.0878}
\end{cases}
\]

These equations have the exact form we are looking for: asymptotes of approximately $3/2$ and $2$, respectively, followed by second-order terms that vanish to zero as $Y$ approaches infinity. It is also worth noting that, when compared against the data, both of the above equations have mean-squared error (MSE) values below $10^{-7}$.

It is evident now that this model independently relates the computational data to the asymptotics given in Theorem \ref{BHSthm}. The next step, as mentioned previously, is to extend our approach for computing class group $2$-torsion sizes to computing class group $p$-torsion sizes, where $p$ is a prime integer. This extension is detailed in the next section, along with our main results. 

\section{Main Results}



\subsection{Introduction to Genetic Programming}


As mentioned in Section 4.3, Eureqa \cite{SL} is a computer application that uses genetic programming \cite{K} to find regression curves that optimally fit the data given. Here we will discuss what genetic programming is, and why it is useful for solving regression problems like the ones we are working with. 

Genetic programming (GP) is an artificial intelligence-based problem-solving technique. Broadly speaking, if a GP-based algorithm is given a problem to solve, it will start by developing randomly generated solutions that often do not solve the problem well. It will then conduct two types of operations, namely ``reproduction'' operations and ``mutation'' operations, to try to modify the original solutions and develop new ones. A ``reproduction'' operation will take two proposed solutions and combine them together somehow to form a new solution. A ``mutation'' operation will take a proposed solution and change it in some way.\footnote{It may be obvious by now that the name ``genetic programming,'' as well as the names of the two types of operations involved in GP, are derived from Darwin's theory of evolution.}

Through combinations of these two types of operations, the algorithm will improve the original randomly-generated solutions and develop new solutions that solve the problem better. We are able to discern whether or not a proposed solution solves the given problem well based on some evaluation metric (e.g. mean-squared error) – this metric is usually referred too as a loss function. Generally, a solution with a lower loss (i.e. a lower loss-function value) is a better solution. 

In our case, we are using a GP-based algorithm – i.e. Eureqa – to solve a regression problem. We start with a set of random solutions, and develop better solutions by conducting ``reproduction'' and ``mutation'' operations on the original solutions. We determine which solutions are better by looking at their mean-squared errors with respect to the original data (i.e. the $\mu_p(\mathcal{F}^{\pm}(Y))$ values that we provide Eureqa, for fixed $p$ and increasing $Y$). It is worth noting, however, that we often do not choose the solutions with the best mean-squared errors, because these solutions are often so complex that they over-fit the given data. In other words, they have poor predictive power for high $Y$-values that are not supplied to Eureqa due to computational limitations. Thus, we tend to look for solutions with sufficiently low mean-squared errors whose forms are as simple as possible. These solutions often take the same form as the $\mu_2(\mathcal{F}^{\pm}(Y))$ equations stated at the end of Section 4.3: 

$$\mu_p(\mathcal{F}^{\pm}(Y)) \approx \alpha^{\pm}-\beta^{\pm}Y^{\gamma^{\pm}}$$ where $\alpha^{\pm},\beta^{\pm},\gamma^{\pm} \in \mathbb{R}$.

Note that GP-based algorithms are often highly probabilistic, so running the same program several times in a row may yield multiple different answers. When comparing the results presented at the end of Section 5.3 with the conjectures stated in Section 6, bear in mind that any differences between computed and conjectured values of $\lim_{Y \rightarrow \infty}\mu_p(\mathcal{F}^{\pm}(Y))$ can be at least partially attributed to the noise inherent to Eureqa's training algorithm.

\subsection{Eureqa Methodology}

In principle, the approach for calculating class group $p$-torsion sizes is highly similar to the approach for calculating class group $2$-torsion sizes. The only theoretical difference lies in Step 5 of the methodology detailed in Section 3. 
By Lemma \ref{TTS}, all we have to do is find all the elements of $Cl(K)[p]$.s

However, due to Proposition \ref{bcf-counts}, when counting monogenic binary cubic forms bounded by height $Y$, there are far more negative-discriminant binary cubic forms than positive-discriminant binary cubic forms. Thus, it is easier to push the positive-discriminant height bound to a high value than the negative-discriminant height bound. In our computation, we pushed the positive-discriminant height bound to $10^{11}$, and we pushed the negative-discriminant height bound to $10^{10}$. 

As we saw in Section 4.3, the formulas Eureqa discovered were of the form $$\mu_p(\mathcal{F}^{\pm}(Y)) \approx \alpha_{i}^{\pm}-\beta_{i}^{\pm}Y^{\gamma_{i}^{\pm}}$$ where $\alpha_{i}^{\pm},\beta_{i}^{\pm},\gamma_{i}^{\pm} \in \mathbb{R}$. It is safe to assume that $\gamma_{i}^+ = \gamma_{i}^-$ \cite{R}. Thus, since we have a higher height bound for positive-discriminant binary cubic forms than for negative-discriminant binary cubic forms, we can give $\gamma_{i}^+$ to the negative-discriminant Eureqa model as a fixed constant. In other words, we can set $\gamma_{i}^-$ to always be equal to $\gamma_{i}^+$. 

With this in mind, we can develop a new Eureqa strategy. Let $p$ be a prime number. First, we train the positive-discriminant model as usual. This yields our formula for $\mu_p(\mathcal{F}^{+}(Y))$, which has the form $\alpha_{i}^{+}-\beta_{i}^{+}Y^{\gamma_{i}^{+}}$. We then train a \textit{second} positive discriminant model where Eureqa is told to use the form $\alpha_{f}^{+}-\beta_{f}^{+}Y^{\gamma_{f}^{+}}$. In addition, the $\alpha_{f}^+$ value – i.e. the asymptote – is fixed. This way, we can eliminate some of the unwanted noise inherent to Eureqa's training algorithm and focus on finding a more accurate value of the exponent $\gamma_{f}^+$. (This is discussed in Section 5.1 above.) We refer to this process as ``fine-tuning" the exponent. 


After all of this is finished, we begin training the negative model. We mandate that Eureqa uses the form $\alpha_{f}^{-}-\beta_{f}^{-}Y^{\gamma_{f}^{-}}$, and we set $\gamma_{f}^-$ to be equal to $\gamma_{f}$, i.e. the fine-tuned version of $\gamma_{i}^+$. After Eureqa discovers $\alpha_{f}^-$ and $\beta_{f}^-$ for us, we have our formula for $\mu_p(\mathcal{F}^{-}(Y))$.\footnote{Note that we use the $\gamma_{i}^{+}$ exponents to state our $\mu_p(\mathcal{F}^{+}(Y))$ formulas, and we use the $\gamma_{f}^{-}$ exponents to state our $\mu_p(\mathcal{F}^{-}(Y))$ formulas. Thus, for a given $p$, the exponent in the $\mu_p(\mathcal{F}^{+}(Y))$ expression will not necessarily be the same as the exponent in the $\mu_p(\mathcal{F}^{-}(Y))$ expression.}

\subsection{Results}

Below are the graphs that we obtained for the average class group $p$-torsion sizes – the first graph is for positive-discriminant binary cubic forms, and the second is for negative-discriminant binary cubic forms. 

\begin{figure}[H]
\begin{center}
\pgfplotstableread{pos_avgs2.txt}
	\datatable
\begin{tikzpicture}[scale = 1.1]
\begin{axis}[xlabel={$Y$},
xlabel style = {font=\tiny,xshift=0.5ex},
scaled x ticks = false, 
xticklabel style = {font=\fontsize{7}{4}\selectfont,yshift=0.5ex}, 
ylabel={$p$-Torsion Size},
ylabel style = {font=\tiny,xshift=0.5ex},
scaled y ticks = false, 
yticklabel style = {font=\fontsize{7}{4}\selectfont,xshift=0.5ex},
legend pos = north west, 
xticklabels = {$0$, $1$, $2 \cdot 10^{10}$, $4 \cdot 10^{10}$, $6 \cdot 10^{10}$, $8 \cdot 10^{10}$, $1 \cdot 10^{11}$}]
\addplot[color=blue]
	table[y = N2]{\datatable};
\addplot[color=red]
	table[y = N3]{\datatable};
\addplot[color=black]
	table[y = N5]{\datatable};
\addplot[color=green]
	table[y = N7]{\datatable};
\addplot[color=orange]
	table[y = N11]{\datatable};
\end{axis}
\end{tikzpicture}
\caption{\centering \footnotesize $\mu_p(\mathcal{F}^{+}(Y))$ for $p = \textcolor{blue}{2}, \textcolor{red}{3}, 5, \textcolor{green}{7}, $ and $\textcolor{orange}{11}$}
\end{center}
\end{figure}

\begin{figure}[H]
\begin{center}
\pgfplotstableread{neg_avgs.txt}
	\datatable
\begin{tikzpicture}[scale = 1.1]
\begin{axis}[xlabel={$Y$},
xlabel style = {font=\tiny,xshift=0.5ex},
scaled x ticks = false, 
xticklabel style = {font=\fontsize{7}{4}\selectfont,yshift=0.5ex}, 
ylabel={$p$-Torsion Size},
ylabel style = {font=\tiny,xshift=0.5ex},
scaled y ticks = false, 
yticklabel style = {font=\fontsize{7}{4}\selectfont,xshift=0.5ex},
legend pos = north west, 
xticklabels = {$0$, $1$, $2 \cdot 10^9$, $4 \cdot 10^9$, $6 \cdot 10^9$, $8 \cdot 10^9$, $1 \cdot 10^{10}$}]
\addplot[color=blue]
	table[y = N2]{\datatable};
\addplot[color=red]
	table[y = N3]{\datatable};
\addplot[color=black]
	table[y = N5]{\datatable};
\addplot[color=green]
	table[y = N7]{\datatable};
\addplot[color=orange]
	table[y = N11]{\datatable};
\end{axis}
\end{tikzpicture}
\caption{\centering \footnotesize $\mu_p(\mathcal{F}^{-}(Y))$ for $p = \textcolor{blue}{2}, \textcolor{red}{3}, 5, \textcolor{green}{7}, $ and $\textcolor{orange}{11}$.}
\end{center}
\end{figure}

Note that at $Y = 10^{11}$, 

\[
\begin{cases}
\mu_2(\mathcal{F}^{+}(10^{11})) = 1.333 \\
\mu_3(\mathcal{F}^{+}(10^{11})) = 1.259 \\ 
\mu_5(\mathcal{F}^{+}(10^{11})) = 1.039 \\ 
\mu_7(\mathcal{F}^{+}(10^{11})) = 1.020 \\ 
\mu_{11}(\mathcal{F}^{+}(10^{11})) = 1.008
\end{cases}
\]

Also note that at $Y = 10^{10}$, 

\[
\begin{cases}
\mu_2(\mathcal{F}^{-}(10^{10})) = 1.714 \\
\mu_3(\mathcal{F}^{-}(10^{10})) = 1.645 \\ 
\mu_5(\mathcal{F}^{-}(10^{10})) = 1.203 \\ 
\mu_7(\mathcal{F}^{-}(10^{10})) = 1.142 \\ 
\mu_{11}(\mathcal{F}^{-}(10^{10})) = 1.090
\end{cases}
\]

Using the new Eureqa strategy described in Section 5.2, we obtained the following generalized asymptotic average predictions. Note that we used mean-squared error (MSE) as our error metric, since it is commonly used in the machine learning community. 


\textbf{Positive Discriminants: }
\[
\begin{cases}
\mu_2(\mathcal{F}^{+}(Y)) \approx 1.51-1.34Y^{-0.0810} \longrightarrow \text{MSE} = 5.620 \times 10^{-9} \\
\mu_3(\mathcal{F}^{+}(Y)) \approx  1.30-1.72Y^{-0.143} \ \longrightarrow \text{MSE} = 1.106 \times 10^{-7}\\ 
\mu_5(\mathcal{F}^{+}(Y)) \approx 1.04-0.595Y^{-0.196} \ \longrightarrow \text{MSE} = 8.359 \times 10^{-9}\\ 
\mu_7(\mathcal{F}^{+}(Y)) \approx 1.02-0.248Y^{-0.171} \ \longrightarrow \text{MSE} = 7.595 \times 10^{-9}\\ 
\mu_{11}(\mathcal{F}^{+}(Y)) \approx 1.01-0.548Y^{-0.261} \ \longrightarrow \text{MSE} = 3.108 \times 10^{-9}
\end{cases}
\]


\textbf{Negative Discriminants\footnote{Note that the formula for $\mu_2(\mathcal{F}^{-}(Y))$ is different from the one in Section 4.3 – this is because we recalculated it using the new Eureqa strategy from Section 5.2.} : }
\[
\begin{cases}
\mu_2(\mathcal{F}^{-}(Y)) \approx 2.00-1.95Y^{-0.0832} \ \longrightarrow \text{MSE} = 1.746 \times 10^{-7}\\
\mu_3(\mathcal{F}^{-}(Y)) \approx 1.45+0.02Y^{0.0939} \ \longrightarrow \text{MSE} = 5.936 \times 10^{-5}\\
\mu_5(\mathcal{F}^{-}(Y)) \approx 1.24-0.254Y^{-0.0872} \ \longrightarrow \text{MSE} = 2.349 \times 10^{-6}\\
\mu_7(\mathcal{F}^{-}(Y)) \approx 1.16-0.485Y^{-0.145} \ \longrightarrow \text{MSE} = 2.995 \times 10^{-6}\\
\mu_{11}(\mathcal{F}^{-}(Y)) \approx 1.10-0.643Y^{-0.178} \ \longrightarrow \text{MSE} = 1.138 \times 10^{-6}
\end{cases}
\]


\section{Conjectures on Class Group $p$-Torsion Sizes}

Based on the set of Eureqa-predicted asymptotes from Section 5, we can now formulate conjectures on the general form of $\lim_{Y \rightarrow \infty}\mu_p(\mathcal{F}^{+}(Y))$ and $\lim_{Y \rightarrow \infty}\mu_p(\mathcal{F}^{-}(Y))$ for $p$ prime, $p \neq 3$.\footnote{Theoretical results indicate that we should not include $p=3$ in the statement of our conjecture \cite{CL}.} The following conjectures were also predicted by Eureqa. 

\begin{conjecture} \label{pos-conjecture} Let $p$ be a prime number such that $p \neq 3$. Then, 
$$\lim_{Y \rightarrow \infty}\mu_p(\mathcal{F}^{+}(Y)) = 1+\dfrac{1}{p(p-1)}$$
\end{conjecture}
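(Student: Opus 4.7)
The approach I would take is to adapt the proof of the Bhargava-Hanke-Shankar theorem (Theorem \ref{BHSthm}) from $p=2$ to general primes $p \neq 3$. The plan is to first produce an integral orbit parametrization of pairs $\bigl((R,\alpha), [\mathfrak{a}]\bigr)$, where $(R,\alpha)$ is a monogenized cubic ring and $[\mathfrak{a}]$ is a $p$-torsion ideal class in $\mathrm{Frac}(R)$, and then to asymptotically count such orbits of bounded height using geometry-of-numbers techniques, sieving to maximal orders, and finally dividing by the $F(\mathbb{Z})$-equivalence class count of Proposition \ref{bcf-counts} to extract the limit.

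In more detail, one seeks a prehomogeneous representation $V_p$ of some algebraic group $G_p$ together with a $G_p(\mathbb{Z})$-equivariant "cubic resolvent" map, such that nondegenerate $G_p(\mathbb{Z})$-orbits on $V_p(\mathbb{Z})$ biject with the pairs above and recover the underlying monogenic cubic ring via the Delone-Faddeev correspondence (Theorem \ref{df}) combined with Theorem \ref{Thm2.2}. One then counts these orbits by Davenport's lattice-point method, restricting to a fundamental domain for the $F(\mathbb{Z})$-action (Definition \ref{F(Z)}) in place of the full $\mathrm{GL}_2(\mathbb{Z})$-action, and applying a Selberg-style sieve to impose maximality prime-by-prime. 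The surplus $1/(p(p-1))$ over the identity-class contribution of $1$ should then emerge from a local mass computation: heuristically, the factor $1/(p(p-1))$ amplifies the Cohen-Lenstra prediction $1/p^2$ for generic totally real cubic fields by an archimedean factor $p/(p-1)$ introduced by the monogenicity condition, precisely the mechanism by which BHS obtain $3/2$ in place of $5/4$ at $p=2$.

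The main obstacle is the very first step. For $p=2$, the required parametrization descends from Gauss's composition of binary quadratic forms and was placed in the monogenic setting by BHS. For $p \geq 5$, however, no integral orbit parametrization of $p$-torsion ideal classes of cubic fields is presently known, and this is precisely why unconditional results on moments of $|Cl(K)[p]|$ remain elusive for general $p$ even in the non-monogenic family. A plausible alternative route is analytic: one can study a Shintani-type zeta function associated to the space of monogenic binary cubic forms twisted by $p$-torsion data, and attempt to extract the averages from its polar structure (in the spirit of Thorne's work on zeta functions of prehomogeneous vector spaces), or appeal to Lie-theoretic parametrizations of the Bhargava-Ho type in the isolated small cases where they are available. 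In the absence of any of these, the numerical evidence in Section 5, combined with the Cohen-Lenstra-style heuristic above, is currently the most substantive justification for the conjecture.
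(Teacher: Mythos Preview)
The statement in question is labeled a \emph{Conjecture} in the paper, and the paper offers no proof of it whatsoever. Its sole support there is computational: the authors compute $\mu_p(\mathcal{F}^{+}(Y))$ for $p\in\{2,5,7,11\}$ up to $Y=10^{11}$, fit the data with genetic-programming regression (Eureqa), and observe that the resulting asymptotes are consistent with $1+\frac{1}{p(p-1)}$. There is nothing to compare your argument against on the paper's side.

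As for your proposal itself, it is a coherent research \emph{program} rather than a proof, and you say so yourself. The geometry-of-numbers template you sketch---parametrize pairs $((R,\alpha),[\mathfrak{a}])$ with $[\mathfrak{a}]\in Cl(K)[p]$ by integral orbits in some prehomogeneous representation, count by Davenport's method over an $F(\mathbb{Z})$-fundamental domain, sieve to maximality, divide by the count in Proposition~\ref{bcf-counts}---is exactly the BHS machinery, and for $p=2$ it does produce Theorem~\ref{BHSthm}. But the gap you identify is fatal for $p\geq 5$: no such coregular representation parametrizing $p$-torsion ideal classes of cubic rings is known, and without it the entire counting apparatus has nothing to act on. Your heuristic that the monogenicity condition should amplify the Cohen--Lenstra prediction $1+1/p^2$ by an archimedean factor $p/(p-1)$ is suggestive and matches the $p=2$ case numerically ($\tfrac{5}{4}\mapsto\tfrac{3}{2}$), but it is not an argument, and neither the paper nor any reference it cites supplies one. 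In short: your outline is more ambitious than anything in the paper, correctly diagnoses why the conjecture is open, and does not close that gap.
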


\begin{conjecture} \label{neg-conjecture} Let $p$ be a prime number such that $p \neq 3$. Then, 
$$\lim_{Y \rightarrow \infty}\mu_p(\mathcal{F}^{-}(Y)) = 1+\dfrac{1}{p-1}$$
\end{conjecture}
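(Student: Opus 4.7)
The plan is to extend the Bhargava--Hanke--Shankar framework of \cite{BHS} from the $p = 2$ case of Theorem \ref{BHSthm} to a general prime $p \neq 3$. The idea is to rewrite
$$\sum_{(R,\alpha)\in\mathcal{F}^-_Y}|Cl(K)[p]| \;=\; \#\bigl\{\bigl((R,\alpha),\,[\mathfrak{a}]\bigr) : (R,\alpha)\in\mathcal{F}^-_Y,\ [\mathfrak{a}]\in Cl(K)[p]\bigr\}$$
and count the pairs on the right by an integer-orbit parametrization combined with Bhargava's geometry-of-numbers averaging method, applied to the $F(\mathbb{Z})$-fundamental domain supplied by Lemmas \ref{f_0} and \ref{f_1} together with Theorem \ref{Thm2.2}. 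Proposition \ref{bcf-counts} already handles the denominator $|\mathcal{F}^-_Y|$, so the entire problem reduces to estimating the numerator.

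The first and principal step is to parametrize pairs (monogenized cubic ring, $p$-torsion ideal class) by integer orbits of a reductive group on a prehomogeneous vector space, in the spirit of Bhargava's higher composition laws. For $p = 2$ such a parametrization is supplied by pairs of ternary quadratic forms; no analogous composition law is presently known for $p \geq 5$, and I expect the construction (or substitute) of such a parametrization to be the \emph{main obstacle}. A partial workaround is to represent $[\mathfrak{a}] \in Cl(K)[p]$ via class field theory as an everywhere-unramified cyclic degree-$p$ extension of $K$ and then count the corresponding triples directly by an adelic mass formula. The case $p = 3$ is excluded because the Delone--Faddeev correspondence already consumes the $3$-torsion data of quadratic resolvents (Scholz reflection), so the local mass at $\ell = 3$ behaves qualitatively differently there.

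Granted a parametrization, I would carry out Bhargava's averaging to produce a main term of order $Y^{5/6}$ matching Proposition \ref{bcf-counts}, with cuspidal regions controlled uniformly in $p$ so that reducible and non-maximal forms contribute only a lower-order error. The main term is then passed through a product sieve enforcing (i) irreducibility of the binary cubic form, (ii) maximality of $R(f)$, (iii) monogenicity at every prime, and (iv) that $[\mathfrak{a}]$ is a genuine $p$-torsion class. At each prime $\ell \neq p$ the monogenicity and $p$-torsion conditions should be independent and combine to a total local factor of $1$; the entire weight of the conjecture rests on the interaction at $\ell = p$, which must produce exactly the factor $\tfrac{1}{p-1}$. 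Should the full orbit parametrization remain out of reach for $p \geq 5$, a reasonable alternative is to formulate a Cohen--Lenstra--Martinet heuristic reweighted by the local monogenicity densities and to verify numerically that this reweighting produces $1 + \tfrac{1}{p-1}$ in the negative-discriminant case, providing a precise local-mass target for future parametrizations to hit and a heuristic consistent with both the proven $p = 2$ instance and the numerical data of Section 5.3.
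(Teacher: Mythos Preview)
The paper does not prove this statement: it is explicitly labeled a \emph{conjecture} and is supported only by numerical evidence. Concretely, the paper computes $\mu_p(\mathcal{F}^{-}(Y))$ for $p\in\{2,3,5,7,11\}$ and $Y$ up to $10^{10}$ via SageMath, then feeds the resulting data into the genetic-programming tool Eureqa to fit curves of the form $\alpha-\beta Y^{\gamma}$. The fitted asymptotes $\alpha$ (namely $2.00,\,1.24,\,1.16,\,1.10$ for $p=2,5,7,11$) are then interpolated by the formula $1+\tfrac{1}{p-1}$, again with Eureqa's assistance. No parametrization, no geometry-of-numbers count, and no sieve argument appears; the paper's Section~7 explicitly says the conjectures ``were made solely based on computational evidence'' and expresses hope that future work will supply theoretical evidence.

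Your proposal is therefore not comparable to anything in the paper: you are sketching a genuine proof strategy where the paper offers none. As a research plan it is reasonable and you are honest about its status, but it is not a proof. The decisive gap, which you yourself identify, is that for $p\geq 5$ no parametrization of pairs (monogenized cubic ring, $p$-torsion ideal class) by integer orbits on a prehomogeneous space is known, and without one the entire Bhargava averaging machinery has nothing to act on. Your fallback---a reweighted Cohen--Lenstra--Martinet heuristic checked against the data---would at best yield another heuristic prediction, not a proof, and would in fact be closer in spirit to what the paper actually does than to the geometry-of-numbers program you outline first. In short: the paper contains no proof for you to match, and your proposal, while a sensible outline of the obstacles, does not close them.
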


These conjectures predict most of the asymptotes in Section 5 with minimal error. The one pair of exceptions is $\lim_{Y \rightarrow \infty}\mu_3(\mathcal{F}^{\pm}(Y))$ – however, note that these conjectures are not expected to hold for $p=3$ \cite{CL}. 

\begin{remark}
It is worth noting that according to conjectures \ref{pos-conjecture} and \ref{neg-conjecture}, $$\lim_{Y \rightarrow \infty}\mu_p(\mathcal{F}^{-}(Y)) - \lim_{Y \rightarrow \infty}\mu_p(\mathcal{F}^{+}(Y)) = \left(1+\dfrac{1}{p-1}\right) - \left(1+\dfrac{1}{p(p-1)}\right) = \dfrac{1}{p}$$
\end{remark}

\section{Conclusions}

Conjectures \ref{pos-conjecture} and \ref{neg-conjecture} are the main contributions of this paper. They are the culminating result of our overall methodology – computationally rediscovering the theoretical result in \cite{BHS} on the average $2$-torsion size of class groups of cubic fields, and then extending this result from average $2$-torsion sizes to average $p$-torsion sizes, for all prime $p$. It is worth noting, however, that these conjectures were made solely based on computational evidence. We hope that future work in this field presents theoretical evidence for these conjectures and provides deeper insight into what these conjectures truly mean. 

\section*{Acknowledgements}


Many thanks to Prof.~Ila Varma (University of Toronto), whose guidance throughout the course of this project was invaluable. Also, thanks to Prof.~Jon Hanke (Princeton University) and Dr.~Dylan Yott (UC Berkeley) for their mentorship during the early stages of this project, and to Prof.~Arul Shankar (University of Toronto) for his advice later in the project. Finally, thanks to Hari Pingali and Stephen New, whose contributions during the early stages of the project helped shape its future.


\begin{thebibliography}{9}

\bibitem{MB}
M. Baker. \textit{Algebraic Number Theory Course Notes (Fall 2006) Math 8803, Georgia Tech} (2006). \url{http://people.math.gatech.edu/~mbaker/pdf/ANTBook.pdf}

\bibitem{B} 
M. Bhargava, \textit{The density of discriminants of quartic rings and fields}, Annals of Mathematics \textbf{162(2)} (2013), 1031-1064. 
 
\bibitem{BHS} 
M. Bhargava, J. Hanke, and A. Shankar, \textit{The mean number of 2-torsion elements in the class groups of $n$-monogenized cubic fields} (2020).  \url{https://arxiv.org/abs/2010.15744}
 
\bibitem{BST} 
M. Bhargava, A. Shankar, and J. Tsimerman, \textit{On the Davenport-Heilbronn theorems and second order terms}, Inventiones Mathematicae \textbf{193} (2013), 439-499. 

\bibitem{BV}
M. Bhargava and I. Varma, \textit{On the mean number of 2-torsion elements in the class groups and narrow class groups of cubic orders and fields}, Duke Mathematical Journal \textbf{164(10)} (2015), 1911-1933.

\bibitem{CL}
H. Cohen and H. W. Lenstra, \textit{Heuristics on class groups of number fields}, Lecture Notes in Mathematics \textbf{1068} (1984), 33-62.

\bibitem{CM}
H. Cohen and J. Martinet, \textit{Class groups of number fields: numerical heuristics.} Mathematics of Computation \textbf{48(177)} (1987), 123-137. 

\bibitem{DH}
H. Davenport and H. Heilbronn, \textit{On the density of discriminants of cubic fields II}, Proceedings of the Royal Society of London Series A \textbf{322(1551)} (1971), 405-420. 

\bibitem{DF}
B. Delone and D. Faddeev. \textit{The theory of irrationalities of the third degree}, AMS Translations of Mathematical Monographs \textbf{10} (1964). 

\bibitem{FK}
E. Fouvry and J. Klüners. \textit{On the 4-rank of class groups of quadratic number fields}, Inventiones Mathematicae \textbf{167} (2007), 455–513.

\bibitem{GGS}
W. Gan, B. Gross, and G. Savin, \textit{Fourier coefficients of modular forms on $G_2$}, Duke Mathematical Journal \textbf{115} (2002), 105-169. 

\bibitem{G}
C. F. Gauss, \textit{Disquisitiones Arithmeticae}, Springer-Verlag, New York (1986).

\bibitem{HVY}
J. Hanke, I. Varma, and D. Yott, \textit{Class Numbers of Monogenic Cubic Fields} (2016). \url{https://tinyurl.com/hanke-varma-yott}

\bibitem{HSV}
W. Ho, A. Shankar, and I. Varma, \textit{Odd degree number fields with odd class number}, Duke Mathematical Journal \textbf{167(5)} (2018), 995-1047.

\bibitem{K}
J. R. Koza, \textit{Genetic Programming: On the Programming of Computers by Means of Natural Selection}, MIT Press, Cambridge, MA (2000).

\bibitem{M}
G. Malle, \textit{On the distribution of class groups of number fields}, Experimental Mathematics \textbf{19(4)} (2010), 465-474. 

\bibitem{R}
D. P. Roberts, \textit{Density of cubic field discriminants}, Mathematics of Computation \textbf{70(236)} (2000), 1699-1705. 

\bibitem{S}
SageMath, the Sage Mathematics Software System,
The Sage Developers, 2016, https://www.sagemath.org.

\bibitem{SL}
M. Schmidt and H. Lipson. \textit{Distilling Free-Form Natural Laws from Experimental Data}, Science \textbf{324(5923)} (2009), 81-85. 

\bibitem{SI}
A. Siad. \textit{Monogenic fields with odd class number Part I: odd degree} (2020). \url{https://arxiv.org/abs/2011.08834}

\bibitem{SII}
A. Siad. \textit{Monogenic fields with odd class number Part II: even degree} (2020). \url{https://arxiv.org/abs/2011.08842}

\end{thebibliography}
\end{document}